\newtheorem{thm}{Theorem}[section]
\newtheorem{cor}[thm]{Corollary}
\newtheorem{lem}[thm]{Lemma}
\newtheorem{rem}[thm]{Remark}
\newtheorem{exa}{Example}
\newtheorem{alg}[thm]{Algorithm}
\newtheorem{obs}{Observation}
\title{On the maximal matchings of trees}
\author{Lingjuan Shi$^{a}$\footnote{
The first author is supported by the Natural Science Foundation of Shaanxi Province (grant no. 2024JC-YBQN-0053) and by NSFC (grant no. 11901458 and 12161002).},
Wei Li$^{b,c}$\footnote{The author is supported by Guangdong Basic and Applied Basic Research Foundation in China (grant no. 2022A1515012342) and the Natural Science Basic Research Plan in Shaanxi Province of China (grant no. 2020JM-133).},
Kai Deng$^d$ \footnote{Corresponding author. The author is supported by NSFC (grant no. 12161002) and the Natural Science Foundation of Ningxia (grant no. 2022AAC03285)},
\date{\small $^{a}$ School of Software, Northwestern Polytechnical University, Xi'an, Shaanxi 710072, P. R. China\\
$^{b}$ Research \& Development Institute of Northwestern Polytechnical University in Shenzhen, Shenzhen, Guangdong 518057, China\\
$^{c}$ School of mathematics and statistics, Northwestern Polytechnical University, Xi'an, Shaanxi 710072, P. R. China\\
$^{d}$ School of Mathematics and Information Science, North Minzu University, Yinchuan, Ningxia 750027, P. R. China\\
E-mails: shilj18@nwpu.edu.cn, liw@nwpu.edu.cn, dengkai04@126.com}
}
\begin{document}
\maketitle
\begin{abstract}
An independent edge set of graph $G$ is a matching, and
is maximal if it is not a proper subset of any other matching of $G$.
The number of all the maximal matchings of $G$ is denoted by $\Psi(G)$.
In this paper, an algorithm to count $\Psi(T)$ for a tree $T$ is given.
We show that for any tree $T$ with $n$ vertices, $\Psi(T)\geq\lceil\frac{n}{2}\rceil$, and the tree which obtained the lower bound is characterized.

\textbf{Keywords:} Maximal matching; Trees

\end{abstract}
\baselineskip=0.3in

\section{Introduction and Preliminaries}
Let $G$ be a finite and simple graph with vertex set $V(G)$ and edge set $E(G)$. The number of vertices of $G$ is called its order, and the number of edges of $G$ is called its size.
The set of neighbors of a vertex $v$ in graph $G$ is
denoted by $N_G(v)$. The degree of a vertex $v$ is the cardinality of $N_G(v)$, denoted by $d_G(v)$.
A vertex is called a \emph{leaf} if its degree is one. A vertex with degree at least three is called a
\emph{branch vertex}.
Let $G$ be a connected graph which is not isomorphic to complete bipartite graph $K_{1,t}$. A \emph{pendant star} with $t$ rays ($t\geq3$) in $G$ is a subgraph $K_{1,t}$ with $t-1$ leaves of $G$ and the central vertex of degree $t$ both in the star and in $G$.
A \emph{pendent path} $P$ of $G$ is a path whose one end vertex is a branch vertex, one end vertex is a leaf and the intermedient vertex (if exists) have degree two in $G$.

For graph $G$, an edge set $M\subseteq E(G)$ is a \emph{matching} of $G$ if any two elements in $M$ are not incident with a common vertex.
And $M$ is \emph{maximal} if it is not contained in any other matching of $G$, $M$ is \emph{maximum} if its size is maximum among all matchings.
We use $\Psi(G)$ to denote the number of maximal matchings
of $G$.

In paper \cite{Rooted-Plane-Trees}, Klazar investigate several counting examples for rooted plane trees, including
the average numbers of maximal matchings of a rooted plane tree on $n$ vertices.
After that Wagner \cite{tree-matching-number} extended the results of Klazar to formulas for matchings, maximal matchings and maximum matchings for three types of simply
generated trees.
For small number $n$ of vertices,
$n\leq19$, J. G\'{o}rska and Z. Skupie\'{n} \cite{Tree-maximal-macthing} characterized the trees which have maximum number of maximal matchings over all trees with $n$ vertices. And  they have found exponential upper and lower bounds on the maximum number of maximal matchings among $n$-vertex trees.

Recently, there have been some results for the
maximal matchings in $3$-connected planar graphs
and graphs with bounded maximum degree \cite{Tight-maximal-maximum},
linear polymers \cite{Doslic, maximal-matching-linear-polymers}, benzenoids \cite{saturation-ben, shi-deng-linear}, fullerenes \cite{saturation-fulle2015, saturation-fulle2008}, nanocones and
 nanotubes \cite{saturation-nano, saturation-nanotube} etc..


In this paper, we give an algorithm to count $\Psi(T)$ for any tree $T$.
By using the algorithm we show that for any tree $T$ with $n$ vertices, $\Psi(T)\geq\lceil\frac{n}{2}\rceil$. In addition, the equation holds if and only if $T$ is isomorphic to $S^{0}(K_{1,\frac{n-1}{2}})$ or $S^{2}(K_{1,\frac{n+1}{2}})$ if $n$ is odd, and $T$ is isomorphic to $S^{1}(K_{1,\frac{n}{2}})$ if $n$ is even.

\section{Counting maximal matchings in trees}
Suppose that graph $G$ consists of two connected components $G_1$ and $G_2$. Obviously, any maximal matching of $G$ consists of a maximal matching of $G_1$ and a maximal matching of $G_2$.
So we have $\Psi(G)=\Psi(G_1)\times \Psi(G_2)$.


A connected acyclic graph (i.e., containing no cycles) is called a \emph{tree}.
For a tree, we can choose its any vertex to be the root, and consider it as a root tree.
A tree rooted at vertex $u\in V(T)$ can be denoted by $T_u$. Except for root $u$,
the vertices in $T_u$ with degree equal to one are called \emph{leaves} or \emph{pendant vertices}, the vertices of degree greater than one are called \emph{internal vertices}, and the vertices of degree greater than two are called
\emph{branch vertices}.

In root tree $T_u$, any vertex $x$ and $u$ are connected by exactly one path, denoted by $uTx$. Each vertex $v$ on the path $uTx$ is called an \emph{ancestor} of $x$, and $x$ is called a \emph{successor} of $v$. For two adjacent vertices $i$, $j$ in $T_u$, if $i$ is an ancestor of
$j$, then $i$ is also called a \emph{parent} of $j$, and $j$ is a \emph{child} of $i$.
For any vertex $y$ in $T_u$, all the successors of $y$ and $y$ induce a new tree, we call it a \emph{root subtree} of $T_u$, and denoted by $T_y^{u}$.

\begin{lem}\label{1-degree vertex}
For a tree $T$ with at least two vertices, $u\in V(T)$ is covered by all maximal matchings of $T$ if and only if $u$ is adjacent to one $1$-degree vertex in $T$.
\end{lem}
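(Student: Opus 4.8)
The plan is to prove the two implications separately, treating the easy direction directly and the converse by a constructive contrapositive argument.

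For the sufficiency direction ($\Leftarrow$), suppose $u$ is adjacent to a $1$-degree vertex $v$, and let $M$ be an arbitrary maximal matching of $T$. I would argue by contradiction: if $u$ were not covered by $M$, then $v$ could not be covered either, since the unique edge incident to $v$ is $uv$, and covering $v$ would force $uv\in M$ and hence cover $u$. But then both endpoints of $uv$ are unsaturated, so $M\cup\{uv\}$ is a strictly larger matching, contradicting the maximality of $M$. Hence every maximal matching covers $u$. This direction needs only the definition of maximality together with the fact that a leaf has a single incident edge.

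For the necessity direction ($\Rightarrow$) I would prove the contrapositive: assuming $u$ has no $1$-degree neighbour, I construct a maximal matching of $T$ that misses $u$. The crucial auxiliary fact is that in any tree with at least two vertices every vertex lies in some maximal matching; this follows by taking one edge incident to the chosen vertex and greedily extending it to a maximal matching. Now delete $u$: since $T$ is a tree, the components of $T-u$ are exactly the root subtrees $T_w^{u}$ as $w$ ranges over the neighbours of $u$, and each such component contains precisely one neighbour $w$. Because $u$ has no leaf neighbour, each $w$ has degree at least two in $T$, so each $T_w^{u}$ has at least two vertices. Applying the auxiliary fact inside each $T_w^{u}$, I choose a maximal matching $M_w$ of $T_w^{u}$ that covers $w$, and set $M=\bigcup_w M_w$.

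It then remains to verify that $M$ is a maximal matching of $T$ leaving $u$ uncovered. Since no $M_w$ uses $u$, the vertex $u$ is unsaturated. For maximality I would examine each edge $f\notin M$: if $f=uw$ is incident to $u$, then $w$ is already saturated by $M_w$, so $f$ cannot be added; if $f$ lies inside some $T_w^{u}$, then maximality of $M_w$ in that subtree provides a saturated endpoint of $f$. Thus no edge extends $M$, so $M$ is a maximal matching avoiding $u$, which completes the contrapositive. I expect the necessity direction to be the main obstacle, and in particular the two ingredients that drive it: isolating the auxiliary fact that every vertex is covered by some maximal matching, and the structural observation that each component of $T-u$ carries exactly one neighbour of $u$ with its own nontrivial subtree. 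Once these are in place, the verification of maximality is routine.
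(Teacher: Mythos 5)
Your proof is correct and follows essentially the same route as the paper: the sufficiency argument is identical (an uncovered $u$ would leave the leaf edge addable, contradicting maximality), and for necessity both proofs work contrapositively by constructing a maximal matching that saturates every neighbour of $u$ while leaving $u$ exposed. The only difference is one of detail: the paper merely asserts the existence of a matching covering $N_T(u)$ but not $u$ and extends it greedily to a maximal one, whereas you justify that step by decomposing $T-u$ into the subtrees $T_w^{u}$ and taking component-wise maximal matchings covering each neighbour, which also lets you verify maximality of the union directly instead of invoking an extension argument.
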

\begin{proof}
We denote the set of all adjacent vertices of $u$ in $T$ by $N_T(u)$.
If $u\in V(T)$ is covered by all maximal matchings of $T$, then
$u$ is adjacent to $1$-degree vertex in $T$. Suppose not, then $T$ has a matching $M$ which covers all the vertices in $N_T(u)$ and not covers $u$.
$M$ can be extended to a maximal matching $M'$ of $T$, and $M'$ not cover $u$, a contradiction. So $u$ is adjacent to a $1$-degree vertex in $T$.

Suppose that $u$ has a neighbor $x$ which has degree one. Then for any maximal matching $M$ of $T$, $ux\in M$ or $uw\in M$ for some $w\in N_T(u)\setminus \{x\}$. So $u$ must be covered by all maximal matchings of $T$.
\end{proof}
\begin{alg}\emph{(}Counting the number of maximal matchings in trees\emph{)}\label{algorithm}
\begin{description}
\item[Step 1.] Choose a vertex in $T$ to be a root and traverse $T$ to root tree $T_u$.

\item[Step 2.] Give all the leaves $v$ in $T_u$ signs $(\alpha_v, \beta_v, \gamma_v)=(1, 0, 1)$.

\item[Step 3.] If all the signs of vertices in $T_u$ have been determined, then go to Step 5, otherwise, go to Step 4.

\item[Step 4.] If the sign of $v$ has not been determined and the signs of all the children of $v$ have been determined, then sign $v$ as
$(\alpha_v, \beta_v, \gamma_v)$, where (suppose that $x_1, x_2, \ldots, x_k$ are the children nodes of $v$, and their signs are $(\alpha_{x_i}, \beta_{x_i}, \gamma_{x_i})$, $i=1, 2, \ldots, k$.)
$\alpha_v:=\prod\limits_{i=1}^{k}\beta_{x_i}$, $\beta_v:=\sum\limits_{i=1}^{k}\prod\limits_{j=1, j\neq i}^{k}(\alpha_j+\beta_j)\gamma_i$,
$\gamma_v:=\prod\limits_{i=1}^{k}(\alpha_i+\beta_i)$.

\item[Step 5.] Return $\alpha_u+\beta_u$, where $(\alpha_u, \beta_u, \gamma_u)$ are the sign of the root $u$.
\end{description}
\end{alg}

For example, in a path $P_n=v_1v_2\cdots v_n$ with $n$ vertices, we root $P_n$ at the vertex $v_n$. By the Algorithm \ref{algorithm}, the sign of $v_1$ is $(1,0,1)$.
$(\alpha_{v_2}, \beta_{v_2}, \gamma_{v_2})=(0, 1, 1)$, $(\alpha_{v_3}, \beta_{v_3}, \gamma_{v_3})=(1, 1, 2)$, $(\alpha_{v_4}, \beta_{v_4}, \gamma_{v_4})=(1, 2, 2)$, $\ldots$, $(\alpha_{v_n}, \beta_{v_n}, \gamma_{v_n})=(\beta_{v_{n-1}}, \gamma_{v_{n-1}}, \alpha_{v_{n-1}}+\beta_{v_{n-1}})$.

\begin{rem}
In the process of Algorithm \ref{algorithm}, for any root subtree $T_v^{u}$ of $T_u$, we have

\emph{(i)} If the sign of $v$ is $(\alpha_v, \beta_v, \gamma_v)$, then $\Psi(T_v^{u})=\alpha_v+\beta_v$ and $\Psi(T_v^{u}-v)=\gamma_v$.
Moreover, $\alpha_v$ is the number of maximal matchings in $T_v^{u}$ such that $v$ is not covered, and $\beta_v$ is the number of maximal matchings in $T_v^{u}$ such that $v$ is  covered.

\emph{(ii)} If the root $u$ of $T$ is determined in the Algorithm, then the sign of each vertex in $T_u$ is determined. And if change the root of $T$, then each vertex of $T$ will obtain a new sign.
\end{rem}
We replace each leaf of $K_{1,n}$ with a star $K_{1,3}$, and let the center of $K_{1,3}$ is adjacent to the center $u$ of $K_{1,n}$. Then a starlike tree with $4n+1$ vertices is obtained, denoted by
$S_n(K_{1,3}, K_{1,3}, \ldots, K_{1,3})$. Applying the Algorithm \ref{algorithm}, we obtain the following conclusion.
\begin{exa}
$$\Psi(S_n(K_{1,3}, K_{1,3}, \ldots, K_{1,3}))=3^{n}+n\cdot3^{n-1}.$$
\end{exa}
\begin{proof}
We root $S_n(K_{1,3}, K_{1,3}, \ldots, K_{1,3})$ at the center $u$, and denoted by $T_u$. By the Algorithm \ref{algorithm}, all the leaves in $T_u$ have signs $(1,0,1)$, and all the neighbors of $u$ have signs $(0,3,1)$. So
$\alpha_u:=\prod\limits_{x\in N(u)}\beta_{x}=3^{n}$,
and $\beta_u:=\sum\limits_{x\in N(u)}\prod\limits_{y\in N(u)\backslash\{x\}}(\alpha_y+\beta_y)\cdot\gamma_x=n\cdot3^{n-1}$. Hence
$$\Psi(S(K_{1,3}, K_{1,3}, \ldots, K_{1,3}))=\Psi(T_u)=\alpha_u+\beta_u=3^{n}+n\cdot3^{n-1}.$$
\end{proof}

Subdividing $m-t$ ($t<m$) edges of star $K_{1,m}$, we obtain a starlike tree with $2m-t+1$ vertices, denoted by $S^{t}(K_{1,m})$. Clearly, if $t=0$, then $S^{0}(K_{1,m})$ is obtained by subdivide each edge of $K_{1,m}$.
We note that $S^{t}(K_{1,m})$ has only one branch vertex if $m\geq3$.

\begin{lem}\label{S222}
For tree $T$ with $n$ vertices which is isomorphic to $S^{0}(K_{1,\frac{n-1}{2}})$ or $S^{2}(K_{1,\frac{n+1}{2}})$ if $n$ is odd, or $S^{1}(K_{1,\frac{n}{2}})$ if $n$ is even, we have
$\Psi(T)=\lceil\frac{n}{2}\rceil$.
\end{lem}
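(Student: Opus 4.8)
The plan is to apply Algorithm~\ref{algorithm} directly, rooting each of the three trees at its central vertex $u$. All three trees are starlike (spiders): the center $u$ has $m$ legs, and each leg is either a single pendant edge (a leaf adjacent to $u$) or a subdivided edge, i.e.\ a path $u\,w\,\ell$ of length two whose middle vertex $w$ has degree two. Concretely, $S^{0}(K_{1,\frac{n-1}{2}})$ has $m=\frac{n-1}{2}$ legs, all of length two; $S^{1}(K_{1,\frac{n}{2}})$ has $m=\frac{n}{2}$ legs, exactly one of them a bare pendant edge; and $S^{2}(K_{1,\frac{n+1}{2}})$ has $m=\frac{n+1}{2}$ legs, exactly two of them bare pendant edges. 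In each case the root subtree $T_u^{u}$ is all of $T$, so by the Remark $\Psi(T)=\alpha_u+\beta_u$, and it remains to read off $\alpha_u$ and $\beta_u$.

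First I would compute the signs of the children of $u$. By Step 2 every leaf gets the sign $(1,0,1)$; in particular a child of $u$ that is a bare pendant leaf has sign $(1,0,1)$. For a length-two leg, the middle vertex $w$ has a single child, namely the leaf $\ell$ with sign $(1,0,1)$; applying Step 4 with $k=1$ gives $\alpha_w=\beta_\ell=0$, $\beta_w=\gamma_\ell=1$, and $\gamma_w=\alpha_\ell+\beta_\ell=1$, so $w$ has sign $(0,1,1)$. The key observation is that every child $x$ of $u$ satisfies $\alpha_x+\beta_x=1$ and $\gamma_x=1$; the only thing that varies is $\beta_x$, which equals $0$ for a bare pendant leaf and $1$ for the middle vertex of a length-two leg.

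Next I would substitute these into the formulas for $u$. Writing $x_1,\dots,x_m$ for the children of $u$, Step 4 gives
\[
\alpha_u=\prod_{i=1}^{m}\beta_{x_i},\qquad
\beta_u=\sum_{i=1}^{m}\Big(\prod_{j\neq i}(\alpha_{x_j}+\beta_{x_j})\Big)\gamma_{x_i}.
\]
Since $\alpha_{x_j}+\beta_{x_j}=1$ and $\gamma_{x_i}=1$ for every child, the products in $\beta_u$ are all $1$, whence $\beta_u=m$ regardless of the tree. The factor $\alpha_u$ is more discriminating: being a product of the $\beta_{x_i}$, it equals $1$ when no leg is a bare pendant edge (all $\beta_{x_i}=1$) and it equals $0$ as soon as at least one leg is a bare pendant edge (some $\beta_{x_i}=0$).

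Finally I would assemble the three cases and check the ceiling. For $S^{0}(K_{1,\frac{n-1}{2}})$ there are no bare pendant edges, so $\alpha_u=1$ and $\Psi(T)=1+m=1+\frac{n-1}{2}=\frac{n+1}{2}=\lceil\frac n2\rceil$ (with $n$ odd). For both $S^{1}(K_{1,\frac{n}{2}})$ and $S^{2}(K_{1,\frac{n+1}{2}})$ at least one leg is a bare pendant edge, so $\alpha_u=0$ and $\Psi(T)=m$; this yields $\frac n2=\lceil\frac n2\rceil$ in the even case and $\frac{n+1}{2}=\lceil\frac n2\rceil$ in the odd case. I do not expect a genuine obstacle here: the argument is a direct evaluation of the algorithm, and the only points demanding care are the sign $(0,1,1)$ of a degree-two vertex and the dichotomy that a single bare pendant edge already forces $\alpha_u=0$; the parity bookkeeping for $\lceil n/2\rceil$ then falls out of the vertex count $2m-t+1=n$.
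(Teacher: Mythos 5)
Your proof is correct, but it takes a different route from the paper. You evaluate Algorithm~\ref{algorithm} directly at the center: every child of $u$ gets sign $(1,0,1)$ (bare pendant leaf) or $(0,1,1)$ (middle vertex of a subdivided leg), so $\alpha_{x}+\beta_{x}=\gamma_{x}=1$ for all children, giving $\beta_u=m$ and $\alpha_u\in\{0,1\}$ according to whether some leg is a bare pendant edge; your sign computations and the final parity bookkeeping via $2m-t+1=n$ all check out. The paper instead argues structurally: by Lemma~\ref{1-degree vertex}, in $S^{1}$ and $S^{2}$ the branch vertex $u$ is adjacent to a leaf and hence covered by every maximal matching, and since each choice of the edge covering $u$ extends uniquely to a maximal matching, $\Psi(T)=d_T(u)$; in $S^{0}$ the matchings avoiding $u$ contribute exactly one more, so $\Psi(T)=d_T(u)+1$. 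The trade-off: the paper's argument is shorter and explains conceptually why the count equals the degree of the branch vertex (or one more), but it checks $n\leq 7$ separately and leaves the ``extends uniquely'' step implicit; your computation is uniform in $n$, requires no case split on small orders, and is in the same style the paper itself uses for its example $S_n(K_{1,3},\ldots,K_{1,3})$ --- at the cost of resting entirely on the correctness of the algorithm and the Remark that $\Psi(T_v^{u})=\alpha_v+\beta_v$, which the paper asserts without a detailed proof.
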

\begin{proof}
For $n\leq7$, it is easy to check the conclusion. Next, suppose $n\geq8$.
Let $u$ be the only branch vertex in $T$.
If $T$ is isomorphic to $S^{2}(K_{1,\frac{n+1}{2}})$ or $S^{1}(K_{1,\frac{n}{2}})$, then
$u$ is adjacent to a leaf vertex of $T$, and $u$ must be covered by all maximal matchings of $T$ by Lemma \ref{1-degree vertex}.
So $\Psi(T)=d_T(u)=\lceil\frac{n}{2}\rceil$.
If $T$ is isomorphic to $S^{0}(K_{1,\frac{n-1}{2}})$, then
$u$ is not adjacent to a leaf vertex.
So the maximal matchings of $T$ cover $u$ or not, and $\Psi(T)=d_T(u)+1=\lceil\frac{n}{2}\rceil$.
\end{proof}

\begin{lem}
Let $v_1v_2x$ be a path in tree $T$ with $d_T(v_2)=2$ and $v_1$ being a leaf, and $T'$ be a new tree obtained from $T$ by deleting edge $v_1v_2$ and adding edge $v_1x$.
Then $\Psi(T')\geq\Psi(T)$.
\end{lem}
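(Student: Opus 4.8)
The plan is to express both $\Psi(T)$ and $\Psi(T')$ through three parameters of the common subtree $T_0:=T-\{v_1,v_2\}=T'-\{v_1,v_2\}$, which still contains $x$, and then compare. Rooting $T_0$ at $x$ and recording its sign $(\alpha_x,\beta_x,\gamma_x)$ as in the Remark, $\alpha_x$ counts the maximal matchings of $T_0$ that leave $x$ uncovered, $\beta_x$ counts those that cover $x$, and $\gamma_x=\Psi(T_0-x)$.

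First I would decompose the maximal matchings of $T$. Because $v_1$ is a leaf adjacent only to $v_2$, maximality forces exactly one of the edges $v_1v_2$ and $v_2x$ into each maximal matching $M$. If $v_1v_2\in M$, then $x$ may be matched freely inside $T_0$ and $M\cap E(T_0)$ ranges over all maximal matchings of $T_0$, contributing $\alpha_x+\beta_x$. If $v_2x\in M$, then $x$ is covered from outside $T_0$, so every edge of $T_0$ at $x$ is already blocked and $M\cap E(T_0)$ ranges over the maximal matchings of $T_0-x$, contributing $\gamma_x$. Hence $\Psi(T)=\alpha_x+\beta_x+\gamma_x$.

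Next I would treat $T'$, where $v_1$ and $v_2$ are both leaves at $x$; by Lemma~\ref{1-degree vertex} every maximal matching of $T'$ covers $x$, and the covering edge is $xv_1$, $xv_2$, or an edge inside $T_0$. Each of the first two choices again leaves $x$ externally covered, forcing $M\cap E(T_0)$ to be a maximal matching of $T_0-x$ and contributing $\gamma_x$; the third choice forces $M\cap E(T_0)$ to be a maximal matching of $T_0$ covering $x$, contributing $\beta_x$. Thus $\Psi(T')=2\gamma_x+\beta_x$, and subtracting gives $\Psi(T')-\Psi(T)=\gamma_x-\alpha_x$. It then suffices to prove $\gamma_x\geq\alpha_x$, which is immediate from Step~4 of Algorithm~\ref{algorithm}: with $x_1,\dots,x_k$ the children of $x$ in $T_0$ we have $\gamma_x=\prod_{i=1}^{k}(\alpha_{x_i}+\beta_{x_i})$ and $\alpha_x=\prod_{i=1}^{k}\beta_{x_i}$, and every factor satisfies $\alpha_{x_i}+\beta_{x_i}\geq\beta_{x_i}$ since $\alpha_{x_i}\geq0$. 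Equivalently, any maximal matching of $T_0$ missing $x$ is itself a maximal matching of $T_0-x$, which injects the $\alpha_x$ objects into the $\gamma_x$ objects.

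I expect the only real obstacle to be the bookkeeping in the two case analyses: one must check that the condition ``$x$ is covered by an edge outside $T_0$'' is exactly what turns the residual count into maximal matchings of $T_0-x$, and that each local configuration on $\{v_1,v_2,x\}$ extends to, and arises from, a genuine maximal matching of the whole tree. Once $\alpha_x,\beta_x,\gamma_x$ are correctly isolated, the inequality $\gamma_x\geq\alpha_x$ costs nothing, so the whole lemma hinges on getting these decompositions right rather than on any delicate estimate.
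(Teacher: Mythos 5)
Your proof is correct, and it reaches the same three--parameter comparison as the paper by a different mechanism. The paper roots both $T$ and $T'$ at $x$ itself and grinds through the recurrences of Algorithm~\ref{algorithm}, expanding $\alpha_x,\beta_x$ and $\alpha_x',\beta_x'$ as sums and products over $N_T(x)$ (with $v_1,v_2$ carried along as nodes of known sign $(1,0,1)$ and $(0,1,1)$) until the terms can be matched up, concluding $\alpha_x'+\beta_x'=\beta_x+\prod_{y\in N_T(x)\setminus\{v_2\}}(\alpha_y+\beta_y)\geq\beta_x+\alpha_x$. You instead strip off $v_1,v_2$, work in the common subtree $T_0$, and decompose maximal matchings directly: in $T$ by which of $v_1v_2,v_2x$ covers $v_2$ (giving $\Psi(T)=\alpha_x+\beta_x+\gamma_x$), and in $T'$ by which edge covers $x$, invoking Lemma~\ref{1-degree vertex} to know $x$ is always covered (giving $\Psi(T')=2\gamma_x+\beta_x$); the signs enter only through their counting interpretation from the Remark. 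The two computations agree term by term --- your $(\alpha_x,\beta_x,\gamma_x)$ for $T_0$ equal the paper's $\alpha_x$, $\beta_x-\gamma_x^{(0)}$, $\gamma_x$ after absorbing the trivial factors from $v_1,v_2$ --- and your final step $\gamma_x\geq\alpha_x$ is exactly Lemma~\ref{ab} (which you reprove both by the product formula and by the injection of maximal matchings of $T_0$ missing $x$ into those of $T_0-x$; both arguments are valid, including the degenerate case where $x$ has no children in $T_0$ and both empty products equal $1$). What your route buys is transparency and a sharper statement: you get the exact identity $\Psi(T')-\Psi(T)=\gamma_x-\alpha_x$, so Lemma~\ref{ab} immediately characterizes when equality holds (every child of $x$ in $T_0$ is adjacent to a leaf), which the paper's proof obtains only as an inequality; what the paper's route buys is uniformity, since the same mechanical expansion of the algorithm's sums is reused nearly verbatim in Lemmas~\ref{3-leaves}, \ref{3-raysstar}, \ref{Path-4} and Theorem~\ref{degree-3-num}. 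Your case bookkeeping is complete: exactly one of $v_1v_2,v_2x$ lies in each maximal matching of $T$ (neither would leave $v_1v_2$ addable, both is impossible), and the restriction/extension correspondences you describe are genuine bijections in each case.
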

\begin{proof}
Choose $x$ as the root and traverse $T$, $T'$ to root trees $T_x$, $T_x'$ respectively.
Suppose that by the Algorithm \ref{algorithm}, the signs of $x$ in $T_x$ and $T_x'$ be $(\alpha_x, \beta_x, \gamma_x)$ and $(\alpha_x', \beta_x', \gamma_x')$ respectively. Then $\Psi(T')=\alpha_x'+\beta_x'$, and $\Psi(T)=\alpha_x+\beta_x$.
\begin{align}
\alpha_x\!=\prod\limits_{y\in N_T(x)}\beta_{y}=\beta_{v_2}\cdot\prod\limits_{y\in N_T(x)\setminus\{v_2\}}\beta_{y}=\prod\limits_{y\in N_T(x)\setminus\{v_2\}}\beta_{y}\notag
\end{align}
\begin{align}
\alpha_x'\!=\prod\limits_{y\in N_{T'}(x)}\beta_{y}'=\beta_{v_1}'\cdot\beta_{v_2}'\cdot\prod\limits_{y\in N_T(x)\setminus\{v_2\}}\beta_{y}'=0
\end{align}
\begin{align}
\beta_x\!=&\sum\limits_{y\in N_T(x)}\prod\limits_{z\in N_T(x)\setminus\{y\}}(\alpha_z+\beta_z)\cdot\gamma_y\notag\\
\!=&\prod\limits_{y\in N_T(x)\setminus\{v_2\}}(\alpha_y+\beta_y)\cdot\gamma_{v_2}+\sum\limits_{y\in N_T(x)\setminus\{v_2\}}\prod\limits_{z\in N_T(x)\setminus\{y\}}(\alpha_z+\beta_z)\cdot\gamma_y\notag\\
\!=&\prod\limits_{y\in N_T(x)\setminus\{v_2\}}(\alpha_y+\beta_y)+\sum\limits_{y\in N_T(x)\setminus\{v_2\}}\prod\limits_{z\in N_T(x)\setminus\{y, v_2\}}(\alpha_z+\beta_z)\cdot\gamma_y\notag
\end{align}
\begin{align}
\beta_x'\!=&\sum\limits_{y\in N_{T'}(x)}\prod\limits_{z\in N_{T'}(x)\setminus\{y\}}(\alpha_z'+\beta_z')\cdot\gamma_y'\notag\\
\!=&\prod\limits_{y\in N_{T'}(x)\setminus\{v_1\}}(\alpha_y'\!+\!\beta_y')\!\cdot\!\gamma_{v_1}'\!+\!\prod\limits_{y\in N_{T'}(x)\setminus\{v_2\}}(\alpha_y'\!+\!\beta_y')\!\cdot\!\gamma_{v_2}'\!+\!\sum\limits_{y\in N_{T'}(x)\setminus\{v_1,v_2\}}\prod\limits_{z\in N_{T'}(x)\setminus\{y\}}(\alpha_z'\!+\!\beta_z')\!\cdot\!\gamma_y'\notag\\
\!=&2\!\cdot\!\prod\limits_{y\in N_{T'}(x)\setminus\{v_1,v_2\}}(\alpha_y'\!+\!\beta_y')\!+\!(\alpha_{v_1}'\!+\!\beta_{v_1}')\!\cdot\!(\alpha_{v_2}'\!+\!\beta_{v_2}')\!\cdot\!\sum\limits_{y\in N_{T'}(x)\setminus\{v_1,v_2\}}\prod\limits_{z\in N_{T'}(x)\setminus\{y,v_1,v_2\}}(\alpha_z'\!+\!\beta_z')\!\cdot\!\gamma_y'\notag\\
\!=&2\cdot\prod\limits_{y\in N_T(x)\setminus\{v_2\}}(\alpha_y+\beta_y)+\sum\limits_{y\in N_{T'}(x)\setminus\{v_1,v_2\}}\prod\limits_{z\in N_{T'}(x)\setminus\{y,v_1,v_2\}}(\alpha_z'+\beta_z')\cdot\gamma_y'\notag\\
\!=&2\cdot\prod\limits_{y\in N_T(x)\setminus\{v_2\}}(\alpha_y+\beta_y)+\sum\limits_{y\in N_T(x)\setminus\{v_2\}}\prod\limits_{z\in N_T(x)\setminus\{y, v_2\}}(\alpha_z+\beta_z)\cdot\gamma_y\notag
\end{align}
So $\alpha_x'+\beta_x'=\beta_x+\prod\limits_{y\in N_T(x)\setminus\{v_2\}}(\alpha_y+\beta_y)\geq\beta_x+\alpha_x$.
This implies that $\Psi(T')\geq\Psi(T)$.
\end{proof}

\section{The minimum number of maximal matchings in trees}
\begin{lem}\label{ab}
Let $T_u$ be a tree rooted at $u$. Then in the process of Algorithm \ref{algorithm}, for any vertex $x\in V(T_u)$, $\alpha_x\leq\gamma_x$. Moreover, the equation holds if and only if $x$ is a leaf in $T_u$, or for any  child $z\in N_{T_x^{u}}(x)$ of $x$ in $T_u$,  $z$ is adjacent to a leaf of $T_u$.
\end{lem}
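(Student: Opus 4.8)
The plan is to read everything off the recurrence in Step 4 of Algorithm \ref{algorithm} together with the combinatorial meaning recorded in the Remark, namely that $\alpha_y+\beta_y=\Psi(T_y^{u})$ and that $\alpha_y$ (resp. $\beta_y$) counts the maximal matchings of $T_y^{u}$ in which $y$ is uncovered (resp. covered). Writing $x_1,\dots,x_k$ for the children of $x$, the recurrence gives $\alpha_x=\prod_{i=1}^{k}\beta_{x_i}$ and $\gamma_x=\prod_{i=1}^{k}(\alpha_{x_i}+\beta_{x_i})$. Since each $\alpha_{x_i}\ge 0$, we have $\beta_{x_i}\le\alpha_{x_i}+\beta_{x_i}$ factor by factor, whence $\alpha_x\le\gamma_x$; when $x$ is a leaf both sides are empty products equal to $1$, so equality holds and the stated condition (``for any child $z$\dots'') is vacuously true. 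This settles the inequality and the leaf case.

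Before analysing equality I would establish two auxiliary facts by induction along $T_u$, both immediate from the recurrence. First, $\alpha_y+\beta_y=\Psi(T_y^{u})\ge 1$ for every $y$, since every graph has at least one maximal matching; equivalently every factor $\alpha_{x_i}+\beta_{x_i}$ is at least $1$, so $\gamma_x\ge 1>0$. Second, $\beta_y=0$ if and only if $y$ is a leaf of $T_u$: a leaf has sign $(1,0,1)$, whereas if $y$ has a child $c$ then including the edge $yc$ and extending to a maximal matching yields a maximal matching of $T_y^{u}$ covering $y$, so $\beta_y\ge 1$ (this can also be read off the recurrence for $\beta_y$, each summand of which is a product of terms all at least $1$).

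Next I would treat the equality. Because every factor of $\gamma_x$ is positive, if some $\beta_{x_i}=0$ then $\alpha_x=0<\gamma_x$, so equality forces all $\beta_{x_i}>0$; then dividing factor by factor, $\alpha_x/\gamma_x=\prod_{i}\beta_{x_i}/(\alpha_{x_i}+\beta_{x_i})$ is a product of numbers in $(0,1]$ equal to $1$, which occurs precisely when $\alpha_{x_i}=0$ for every $i$. Thus, for a non-leaf $x$, $\alpha_x=\gamma_x$ if and only if $\alpha_{x_i}=0$ for every child $x_i$ of $x$.

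It remains to translate the algebraic condition $\alpha_{x_i}=0$ into the geometric one in the statement. Using $\alpha_{x_i}=\prod_{c}\beta_c$ over the children $c$ of $x_i$ together with the second auxiliary fact, $\alpha_{x_i}=0$ if and only if some child $c$ of $x_i$ satisfies $\beta_c=0$, i.e. if and only if $x_i$ has a child that is a leaf of $T_u$. Finally I would observe that in a rooted tree the neighbours of $x_i$ are its parent $x$ and its children, and that $x$—having the child $x_i$—either has degree at least two or is the root $u$, so by the convention in force $x$ is never a leaf of $T_u$; hence ``$x_i$ has a leaf child'' coincides with ``$x_i$ is adjacent to a leaf of $T_u$''. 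Combining this with the previous paragraph yields exactly the stated equivalence. (Alternatively, $\alpha_{x_i}=0$ means $x_i$ is covered by every maximal matching of $T_{x_i}^{u}$, which can be identified with adjacency to a leaf directly via Lemma \ref{1-degree vertex}.) The only delicate points are the handling of possibly-zero factors in the product equality and this last translation step, which is where I expect the bookkeeping to require the most care.
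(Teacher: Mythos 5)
Your proof is correct and follows essentially the same route as the paper's: compare $\alpha_x=\prod_i\beta_{x_i}$ with $\gamma_x=\prod_i(\alpha_{x_i}+\beta_{x_i})$ factor by factor, characterize equality by $\alpha_{x_i}=0$ for every child, and translate this into adjacency to a leaf. You are in fact more careful than the paper at the points it glosses over --- ruling out the $\beta_{x_i}=0$ case via positivity of $\gamma_x$, and noting that the parent of a child of $x$ is never a leaf of $T_u$, so ``has a leaf child'' coincides with ``adjacent to a leaf of $T_u$'' --- but these are refinements of the same argument, not a different one.
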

\begin{proof}
If $x\neq u$ and $x$ is a leaf in $T_u$, then $\alpha_x=\gamma_x=1$ by the Algorithm \ref{algorithm}.
Next, we suppose that $x$ is not a leaf point in $T_u$.
Recall that $T_x^{u}$ is the sub-tree in $T_u$ with root $x$.
By the Algorithm \ref{algorithm}, $\alpha_x=\prod\limits_{z\in N_{T_x^{u}}(x)}\beta_z$, $\gamma_x=\prod\limits_{z\in N_{T_x^{u}}(x)}(\alpha_z+\beta_z)$.
So $\alpha_x\leq\gamma_x$.
In addition, the equation holds if and only if $\alpha_z=0$ for any child $z$ of $x$ in $T_u$,
that is, $z$ is adjacent to a leaf of $T_u$.
\end{proof}

\begin{lem}
For any leaf point $y$ of a tree $T$ with at least three vertices, $\Psi(T)\geq\Psi(T-y)$. Moreover, suppose the only neighbor of $y$ being $x$,
 the equality holds if and only if except for $y$, any neighbor of $x$ is adjacent to at least one leave vertex.
\end{lem}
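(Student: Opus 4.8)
The plan is to compute both $\Psi(T)$ and $\Psi(T-y)$ by running Algorithm \ref{algorithm} with the \emph{same} root, namely the neighbor $x$ of $y$, and then compare the two outputs term by term. First I would note that since $T$ is connected with at least three vertices, $x$ cannot be a leaf, so $d_T(x)\geq 2$; write $N_T(x)=\{y,z_1,\dots,z_k\}$ with $k\geq 1$. Rooting $T$ at $x$ makes $y$ a leaf child of $x$ with sign $(1,0,1)$, while the root subtrees hanging at $z_1,\dots,z_k$ are literally the same graphs in $T$ and in $T-y$; hence each $z_i$ receives the identical sign $(\alpha_i,\beta_i,\gamma_i)$ in both computations. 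This is the structural point that makes the comparison clean.

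Next I would apply the sign rules of Step 4 at $x$ in each tree. In $T$ the factor $\beta_y=0$ forces $\alpha_x=0$, while the contributions $\gamma_y=1$ and $\alpha_y+\beta_y=1$ give $\beta_x=\prod_{i=1}^{k}(\alpha_i+\beta_i)+\sum_{i=1}^{k}\prod_{j\neq i}(\alpha_j+\beta_j)\gamma_i$. In $T-y$ the vertex $x$ has only the children $z_1,\dots,z_k$, so $\alpha_x'=\prod_{i=1}^{k}\beta_i$ and $\beta_x'=\sum_{i=1}^{k}\prod_{j\neq i}(\alpha_j+\beta_j)\gamma_i$. The two summation terms coincide, so after Step 5 everything collapses to $\Psi(T)-\Psi(T-y)=\prod_{i=1}^{k}(\alpha_i+\beta_i)-\prod_{i=1}^{k}\beta_i$. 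Since every $\alpha_i\geq 0$, each factor satisfies $\alpha_i+\beta_i\geq\beta_i$, and the inequality $\Psi(T)\geq\Psi(T-y)$ follows at once.

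For the equality I would argue as follows. Every factor $\alpha_i+\beta_i=\Psi(T_{z_i}^{x})$ is at least $1$, so $\prod_i(\alpha_i+\beta_i)>0$; consequently $\prod_i(\alpha_i+\beta_i)=\prod_i\beta_i$ can hold only when no $\beta_i$ vanishes and each factor already agrees, i.e.\ exactly when $\alpha_i=0$ for every $i$. It then remains to translate $\alpha_i=0$ into a statement about leaves. The key auxiliary fact is that $\beta_w=0$ if and only if $w$ is a leaf: a leaf has sign $(1,0,1)$, whereas for a non-leaf $w$ a single child already contributes a term of the form $\prod_{}(\alpha+\beta)\cdot\gamma\geq 1$ to the sum defining $\beta_w$. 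Because $\alpha_i=\prod_{w}\beta_w$ ranges over the children $w$ of $z_i$, we get $\alpha_i=0$ iff some child of $z_i$ is a leaf; as $x$ is the only neighbor of $z_i$ outside its subtree and $x$ is not a leaf, this is exactly the condition that $z_i$ is adjacent to a leaf of $T$.

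Collecting these observations, equality holds iff every $z_i$, that is every neighbor of $x$ other than $y$, is adjacent to at least one leaf, which is the asserted characterization. The part I expect to require the most care is the equality direction: I must make sure the product identity $\prod_i(\alpha_i+\beta_i)=\prod_i\beta_i$ is handled correctly when some $\beta_i$ could be $0$ (it cannot be, once $\alpha_i=0$, since $\alpha_i+\beta_i\geq 1$), and that the reduction $\alpha_i=0\Leftrightarrow z_i$ adjacent to a leaf stays valid in the degenerate case where $z_i$ is itself a leaf (then $\alpha_i=1$ as an empty product and $z_i$ is indeed adjacent to no leaf). These checks can alternatively be phrased through Lemma \ref{1-degree vertex} applied to the subtree $T_{z_i}^{x}$.
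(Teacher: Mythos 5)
Your proposal is correct and is essentially the paper's own argument: the paper roots the tree at the leaf $y$ and quotes Lemma \ref{ab}, which asserts precisely your comparison $\alpha_x=\prod_i\beta_i\leq\prod_i(\alpha_i+\beta_i)=\gamma_x$ together with the equality condition $\alpha_i=0$ for every child $z_i$, so your choice of rooting at $x$ merely unfolds the proof of that lemma inline instead of citing it. Your additional care with the degenerate case where some $z_i$ is itself a leaf (so $\alpha_i=1$ as an empty product and equality fails, via $\beta_w=0$ iff $w$ is a leaf, which is the paper's Observation \ref{a+b>0}) is sound and consistent with the paper's characterization.
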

\begin{proof}
Let $y$ be the root of tree $T$. We denote $T$ by $T_y$ and suppose that the only adjacent vertex of $y$ is $x$.
$T_x^{y}$ is the sub-tree in $T_y$ with root $x$.
By the Algorithm \ref{algorithm}, the signs of $x$ and $y$ are $(\alpha_x, \beta_x, \gamma_x)$ and $(\alpha_y, \beta_y, \gamma_y)$, and
$\alpha_y=\beta_x, \beta_y=\gamma_x$.
In addition, $\alpha_x\leq\gamma_x$ by Lemma \ref{ab}.
So $\Psi(T)=\alpha_y+\beta_y=\beta_x+\gamma_x\geq\beta_x+\alpha_x=\Psi(T_x^{y})=\Psi(T-y)$, and by Lemma \ref{ab} the equality holds if and only if except for $y$, any neighbor of $x$ is adjacent to at least one leave vertex since $T$ has at least three vertices.
\end{proof}
In fact, the following more general conclusion is true.
\begin{lem}[\cite{Tree-maximal-macthing}]\label{DM}
For any vertex $v$ in a tree $T$, $\Psi(T-v)\leq\Psi(T)$, and the equality holds if and only if $v$ is a leave and except for $v$, any neighbor of $x$ (the only neighbor of $v$) is adjacent to at least one leave vertex.
\end{lem}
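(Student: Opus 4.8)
The plan is to root $T$ at $v$ and run Algorithm \ref{algorithm}, reducing the statement to an inequality between the signs of the children of $v$. Write $k=d_T(v)$ and let $x_1,\dots,x_k$ be the children of $v$ in $T_v$, with signs $(a_i,b_i,c_i)=(\alpha_{x_i},\beta_{x_i},\gamma_{x_i})$. Since $T-v$ is the disjoint union of the root subtrees $T_{x_1}^{v},\dots,T_{x_k}^{v}$, multiplicativity of $\Psi$ over components gives $\Psi(T-v)=\prod_{i=1}^{k}(a_i+b_i)$, while the sign rules of Step~4 give $\Psi(T)=\alpha_v+\beta_v=\prod_{i=1}^{k}b_i+\sum_{i=1}^{k}c_i\prod_{j\neq i}(a_j+b_j)$. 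Thus the whole statement becomes the comparison
\[
\prod_{i=1}^{k}b_i+\sum_{i=1}^{k}c_i\prod_{j\neq i}(a_j+b_j)\ \ge\ \prod_{i=1}^{k}(a_i+b_i),
\]
together with a description of when it is tight.

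For the inequality itself, I would first replace each $c_i$ by $a_i$ using Lemma \ref{ab} ($a_i=\alpha_{x_i}\le\gamma_{x_i}=c_i$), which only decreases the middle term, so it suffices to prove the purely algebraic estimate $\prod_i b_i+\sum_i a_i\prod_{j\neq i}(a_j+b_j)\ge\prod_i(a_i+b_i)$. Expanding both sides over subsets $S\subseteq\{1,\dots,k\}$ via $\prod_i(a_i+b_i)=\sum_S\prod_{j\in S}a_j\prod_{j\notin S}b_j$, the empty set contributes $\prod_i b_i$ to both sides, while each nonempty $S$ contributes $\prod_{j\in S}a_j\prod_{j\notin S}b_j$ with multiplicity $|S|$ on the left (one summand $a_i\prod_{j\neq i}(\cdots)$ for each $i\in S$) and multiplicity $1$ on the right; hence the difference equals $\sum_{|S|\ge 2}(|S|-1)\prod_{j\in S}a_j\prod_{j\notin S}b_j\ge 0$. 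Combining the two steps yields $\Psi(T)\ge\Psi(T-v)$ and, crucially, exhibits two nonnegative ``defects'' whose simultaneous vanishing is exactly equality: $D_1=\sum_i (c_i-a_i)\prod_{j\neq i}(a_j+b_j)$ and $D_2=\sum_{|S|\ge2}(|S|-1)\prod_{j\in S}a_j\prod_{j\notin S}b_j$.

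The delicate part, and the main obstacle, is the equality characterization: showing a non-leaf $v$ always gives strict inequality, while a leaf $v$ gives the stated neighborhood condition. Since $a_i+b_i=\Psi(T_{x_i}^{v})\ge 1$, every factor $\prod_{j\neq i}(a_j+b_j)$ is positive, so $D_1=0$ forces $c_i=a_i$ for all $i$. If $v$ is not a leaf then $k\ge 2$, and taking $S=\{1,\dots,k\}$ in $D_2$ forces $\prod_i a_i=0$, so some $a_{i^\ast}=0$; but then $c_{i^\ast}=a_{i^\ast}=0$ contradicts $c_{i^\ast}=\gamma_{x_{i^\ast}}=\Psi(T_{x_{i^\ast}}^{v}-x_{i^\ast})\ge 1$. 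Hence at least one defect is strictly positive and $\Psi(T)>\Psi(T-v)$ whenever $v$ is not a leaf. If $v$ is a leaf ($k=1$ with sole neighbor $x$), both formulas collapse to $\Psi(T)-\Psi(T-v)=c_1-a_1=\gamma_x-\alpha_x$, and Lemma \ref{ab} identifies equality with ``$x$ is a leaf of $T_v$, or every child of $x$ in $T_v$ is adjacent to a leaf of $T_v$'', which is precisely the requirement that, apart from $v$, every neighbor of $x$ be adjacent to a leaf of $T$. This completes the characterization.
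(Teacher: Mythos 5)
Your proof is correct, and it does strictly more than the paper itself does: the paper states Lemma \ref{DM} without proof, as a citation to G\'{o}rska and Skupie\'{n} \cite{Tree-maximal-macthing}, and proves on its own only the special case where the deleted vertex is a leaf (the unlabelled lemma immediately preceding), by rooting at the leaf and reducing to $\Psi(T)-\Psi(T-y)=\gamma_x-\alpha_x$ via Lemma \ref{ab}. Your $k=1$ case reproduces exactly that argument, including the correct translation of the condition in Lemma \ref{ab} (children of $x$ in $T_v$ are precisely the neighbors of $x$ in $T$ other than $v$, and none of them can be adjacent to $v$, so ``leaf of $T_v$'' and ``leaf of $T$'' agree here; the case ``$x$ is a leaf of $T_v$'' is the vacuous $T\cong P_2$ situation). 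What is genuinely new relative to the paper is your handling of arbitrary $v$: rooting at $v$ and writing $\Psi(T)-\Psi(T-v)=D_1+D_2$, where $D_1=\sum_i(c_i-a_i)\prod_{j\neq i}(a_j+b_j)\geq 0$ comes from Lemma \ref{ab} and $D_2=\sum_{|S|\geq 2}(|S|-1)\prod_{j\in S}a_j\prod_{j\notin S}b_j\geq 0$ from the subset expansion of $\prod_i(a_i+b_i)$; both identities check out, and the multiplicativity step $\Psi(T-v)=\prod_i(a_i+b_i)$ is legitimate because the sign of each child depends only on its own subtree. Your strictness argument for internal $v$ is also sound: $D_1=0$ forces $c_i=a_i$ for all $i$ since every cofactor is at least $1$ (Remark \ref{a+b>r}), while $D_2=0$ with $k\geq 2$ annihilates in particular the full-set term, giving some $a_{i^*}=0=c_{i^*}$ and contradicting $\gamma_{x_{i^*}}\geq 1$. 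The only untreated degeneracy is the one-vertex tree ($k=0$), where $\Psi(T)=\Psi(T-v)$ yet $v$ is not a leaf; the lemma implicitly assumes $T$ has an edge, so this is cosmetic. In short, where the paper delegates the general statement to the literature, your two-defect decomposition yields a short self-contained proof that subsumes the paper's leaf-case lemma, at the modest cost of the algebraic expansion over subsets.
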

By the above Lemma, we have $\alpha_v+\beta_v\geq\gamma_v$ if $T$ is rooted at a vertex $v$.
Furthermore, the following remark holds.
\begin{rem}\label{a+b>r}
Suppose that $T_z^{v}$ is a subtree rooted at $z$ in the root tree $T_v$. Then
$\alpha_z+\beta_z\geq\gamma_z\geq1$ in the intermediate process of the Algorithm \ref{algorithm}.
Moreover, the first equality holds if and only if $z$ has only one neighbor in $T_z^{v}$, say $x$, and
any child $y\in N_{T_z^{v}}(x)\setminus\{z\}$ of $x$ is adjacent to at least one leave vertex in $T$.
\end{rem}

\begin{obs}\label{a+b>0}
For any vertex $x$ of a root tree $T_u$ with at least two vertices, in the intermediate process of the Algorithm \ref{algorithm}, $\beta_x>0$ if $x$ is not a leaf point.
\end{obs}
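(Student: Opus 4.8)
The plan is to read the claim off directly from the recursion defining $\beta_x$ in Step~4 of Algorithm~\ref{algorithm}, combined with the per-vertex positivity already recorded in Remark~\ref{a+b>r}. First I would observe that, since $x$ is not a leaf of $T_u$ and $T_u$ has at least two vertices, $x$ has at least one child in the root subtree $T_x^{u}$; write these children as $x_1,\ldots,x_k$ with $k\geq 1$. This needs a brief case split: if $x=u$, then $u$ has a neighbour because $|V(T_u)|\geq 2$, and that neighbour is a child; if $x\neq u$ and $x$ is not a leaf, then $d_{T_u}(x)\geq 2$, so besides its parent $x$ still has at least one child.

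Next I would recall the defining formula
\[
\beta_x=\sum_{i=1}^{k}\gamma_{x_i}\prod_{j=1,\,j\neq i}^{k}(\alpha_{x_j}+\beta_{x_j})
\]
and bound each factor from below. By Remark~\ref{a+b>r}, for every child $z$ of $x$ we have $\alpha_z+\beta_z\geq\gamma_z\geq 1$; in particular $\gamma_{x_i}\geq 1$ for each $i$ and $\alpha_{x_j}+\beta_{x_j}\geq 1$ for each $j$. Hence every summand $\gamma_{x_i}\prod_{j\neq i}(\alpha_{x_j}+\beta_{x_j})$ is a product of quantities that are each at least $1$, so it is itself at least $1$. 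Summing over the $k\geq 1$ children gives $\beta_x\geq k\geq 1>0$, which is exactly the assertion.

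There is essentially no serious obstacle here beyond making sure the two inputs are legitimately available: that $x$ genuinely has a child (handled by the root/non-root split above) and that the lower bound $\alpha_z+\beta_z\geq 1$ holds for every vertex already processed, which is precisely the content of Remark~\ref{a+b>r}. As a sanity check one can also argue combinatorially: $\beta_x$ counts the maximal matchings of $T_x^{u}$ that cover $x$, and since $x$ has a child $x_1$ one may start from the edge $xx_1$ and greedily extend it to a maximal matching of $T_x^{u}$; the resulting matching covers $x$, so it is counted by $\beta_x$, confirming $\beta_x>0$ with no arithmetic at all.
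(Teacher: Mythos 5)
Your proposal is correct; note that the paper itself states Observation~\ref{a+b>0} with no proof at all, treating it as immediate from the recursion, so there is no in-paper argument to match against --- you are supplying the justification the authors omitted. Your main route is the natural one: every summand in $\beta_x=\sum_{i}\gamma_{x_i}\prod_{j\neq i}(\alpha_{x_j}+\beta_{x_j})$ is at least $1$ once one knows $\gamma_z\geq 1$ and $\alpha_z+\beta_z\geq 1$ for all children $z$, and you correctly source those bounds from Remark~\ref{a+b>r}, which precedes the Observation in the paper (and is itself non-circular: it rests on the cited Lemma~\ref{DM} together with the fact that $\gamma_z=\Psi(T_z^{v}-z)\geq 1$, every graph having at least one maximal matching). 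Two details you handled that deserve credit: the root/non-root split guaranteeing $x$ has a child is genuinely needed, and under the paper's convention the root is never called a leaf even when it has degree one, so the case $x=u$ with $d(u)=1$ is in scope and your argument covers it ($\beta_u=\gamma_{x_1}\geq 1$). Your closing combinatorial argument --- take the edge $xx_1$ and extend greedily to a maximal matching of $T_x^{u}$, which covers $x$ and is therefore counted by $\beta_x$ via part (i) of the remark following Algorithm~\ref{algorithm} --- is arguably the better proof, since it is self-contained and avoids leaning on Remark~\ref{a+b>r} and hence on the imported Lemma~\ref{DM}; if anything, leading with it would make the write-up cleaner.
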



\begin{thm}\label{even}
If for each leaf point $x$ in a tree $T$ with at least $3$ vertices, $\Psi(T)=\Psi(T-x)$, then the order of $T$ is even.
\end{thm}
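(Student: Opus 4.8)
The plan is to combine Lemma \ref{DM} with a structural analysis of the vertices adjacent to leaves. Call a vertex a \emph{support vertex} if it is adjacent to at least one leaf, and write $L$ for the leaf set and $S$ for the set of support vertices. Fix any leaf $x$ and let $w$ be its unique neighbor. Since $x$ is a leaf, Lemma \ref{DM} says that $\Psi(T)=\Psi(T-x)$ holds if and only if every neighbor of $w$ other than $x$ is adjacent to at least one leaf. Thus the hypothesis is equivalent to the purely structural condition: for every leaf $x$ with neighbor $w$, every vertex in $N_T(w)\setminus\{x\}$ is a support vertex. Converting the matching-count assumption into this local statement is routine, and is where Lemma \ref{DM} does all the work.

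Next I would extract two consequences. First, each support vertex carries exactly one leaf. Indeed, suppose a support vertex $w$ had two leaf neighbors $x$ and $x'$. Applying the local condition to the leaf $x$, the vertex $x'\in N_T(w)\setminus\{x\}$ would have to be a support vertex; but $x'$ is a leaf whose only neighbor is $w$, and $w$ has degree at least two, so $x'$ is adjacent to no leaf, a contradiction. Consequently the map sending each leaf to its unique neighbor is a bijection from $L$ onto $S$, which gives $|L|=|S|$.

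Second, I would show that every internal (non-leaf) vertex of $T$ is a support vertex. Delete all leaves to form $T-L$; because $T$ is a tree on at least three vertices, this is a nonempty connected subtree whose vertices are exactly the internal ones, and every support vertex has degree at least two and so lies in $T-L$. For a support vertex $w$ with unique leaf $x$, its neighbors other than $x$ are precisely its neighbors inside $T-L$, and by the local condition each of them is a support vertex. Hence $S$ is closed under taking neighbors within $T-L$; since $S$ is nonempty and $T-L$ is connected, this forces $S=V(T-L)$, so every internal vertex is a support vertex.

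Combining the two facts yields the theorem: the internal vertices are exactly the support vertices, so their number equals $|S|=n-|L|$, while the bijection gives $|S|=|L|$; therefore $n=2|L|$ is even. I expect the main obstacle to be the second step, proving that the support vertices exhaust the internal vertices, because it is the only place that needs a global connectivity argument rather than a purely local check. The key move there is to pass to $T-L$ and observe that the local condition is exactly the statement that $S$ is neighbor-closed in $T-L$, after which connectivity finishes the job.
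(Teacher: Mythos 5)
Your proof is correct, but it reaches the parity conclusion by a genuinely different route than the paper. Both arguments start from the same local fact: for each leaf $x$ with neighbor $w$, the equality $\Psi(T)=\Psi(T-x)$ forces every vertex of $N_T(w)\setminus\{x\}$ to be adjacent to a leaf. You import this directly from Lemma \ref{DM}, whereas the paper re-derives it inside the sign calculus of Algorithm \ref{algorithm} (its Claim 1: rooting at $x$, the equality gives $\alpha_y=\gamma_y$, hence $\alpha_z=0$, i.e.\ $z$ is adjacent to a leaf, for every $z\in N_T(y)\setminus\{x\}$). The divergence is in the second half: the paper argues recursively (its Claim 2) that each root subtree $T_z^{x}$ has even order, descending leaf-by-leaf and invoking finiteness, while you prove the global structural statement that the support vertices are exactly the internal vertices, each carrying exactly one leaf, and then simply count $n=|L|+|S|=2|L|$. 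Your two steps are sound: the one-leaf-per-support claim is a correct application of the local condition to a second leaf neighbor, and the neighbor-closure-plus-connectivity argument in $T-L$ (which is indeed a nonempty subtree once $n\geq 3$, with $S\subseteq V(T-L)$ since a support vertex cannot be a leaf when $n\geq 3$) legitimately forces $S=V(T-L)$. This is a clean replacement for the paper's rather informal recursion (``we can recursively prove that $|T_z|$ is even''), and it has the added benefit of establishing, en route, precisely the stronger statement the paper only records afterwards as a corollary --- that each inner vertex is adjacent to exactly one leaf --- from which parity falls out as an immediate count rather than an induction. What the paper's version buys in exchange is that it stays entirely within the $(\alpha,\beta,\gamma)$ machinery that the rest of the paper runs on, whereas your argument outsources the analytic content to Lemma \ref{DM} and is otherwise purely combinatorial.
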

\begin{proof}
Suppose that the only adjacent vertex of $x$ is $y$.

\emph{Claim 1.} For any vertex $z\in N_T(y)\setminus \{x\}$, $z$ is adjacent to a leaf point in $T$.

We consider $T$ as the root tree $T_x$. By the Algorithm \ref{algorithm}, $\Psi(T)=\alpha_x+\beta_x$, $\Psi(T-x)=\alpha_y+\beta_y$, and $\alpha_x=\beta_y$, $\beta_x=\gamma_y$. So $\alpha_y=\gamma_y$ since $\Psi(T)=\Psi(T-x)$.
We note that $\alpha_y=\prod\limits_{z\in N_{T_y^{x}}(y)}\beta_z$, and $\gamma_y=\prod\limits_{z\in N_{T_y^{x}}(y)}(\alpha_z+\beta_z)>0$.
By Observation \ref{a+b>0}, $\alpha_z=0$ for any vertex $z\in N_T(y)\setminus\{x\}$. This implies that any vertex $z\in N_T(y)\setminus\{x\}$ is adjacent to a leaf point in $T$.

\emph{Claim 2.} For any vertex $z\in N_T(y)\setminus \{x\}$, the subtree $T_z^{x}$ of $T_x$ has even vertices.

By Claim $1$, the subtree $T_z^{x}$ has at least two vertices.
If for any vertex $z\in N_T(y)\setminus \{x\}$, the subtree $T_z$ is a path with two vertices, then the order of $T$ is even.
Next we show that if the sub-tree $T_z$ is not a path with two vertices, then its order is also even.
By Claim $1$, we know that $z$ is adjacent to a leaf point $w$ in $T$.
Next, we consider the subtree $T_z^{x}$, we rooted it at vertex $w$.
As the discussion of Claim $1$, we know that,
for any vertex $v\in N_{T_z^{x}}(z)\setminus \{w\}$, $v$ is adjacent to a leaf point in $T$. We can recursively prove that $|T_z|$ is even since the order of $T$ is finite.

So the order of $T$ is even.
\end{proof}

In fact, the proof of the Lemma \ref{even} shows the following stronger conclusion.
\begin{cor}
For a tree $T$ with at least $3$ vertices,
if for each leaf point $x$ of $T$, $\Psi(T)=\Psi(T-x)$, then
each inner vertex of $T$ is adjacent to exactly one leaf point.
\end{cor}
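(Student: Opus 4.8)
The plan is to establish the two halves of ``exactly one'' separately: every inner vertex is adjacent to \emph{at most} one leaf, and to \emph{at least} one leaf. The first half is immediate from Lemma~\ref{DM}. Indeed, if some inner vertex $v$ had two distinct leaf neighbors $x_1,x_2$, then applying the hypothesis to the leaf $x_1$ gives $\Psi(T)=\Psi(T-x_1)$, so by Lemma~\ref{DM} every neighbor of $v$ other than $x_1$ is adjacent to a leaf. But $x_2$ is such a neighbor, and its only neighbor is the non-leaf $v$; thus $x_2$ is adjacent to no leaf, a contradiction.

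For the ``at least one'' half I would recast Claim~1 in the proof of Theorem~\ref{even} as an induction that sweeps across the whole tree. Recall that the non-leaf vertices of a tree span a connected subtree $T'$, since any path between two non-leaves passes only through vertices of degree at least two. Fix a leaf $x_0$ with neighbor $y_0$; since $|V(T)|\ge 3$, the vertex $y_0$ has degree at least two and so lies in $T'$, and it already owns the leaf neighbor $x_0$. I then induct on the distance in $T'$ from $y_0$. Given an inner vertex $v$ at positive distance, let $u$ be its neighbor one step closer to $y_0$ in $T'$; by the inductive hypothesis $u$ has a leaf neighbor $w$. Because the hypothesis of the corollary holds at \emph{every} leaf, in particular $\Psi(T)=\Psi(T-w)$, Lemma~\ref{DM} applied to the leaf $w$ (whose unique neighbor is $u$) forces every neighbor of $u$ other than $w$ to be adjacent to a leaf; since $v\ne w$ is a neighbor of $u$, the vertex $v$ too has a leaf neighbor. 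Connectivity of $T'$ then propagates this conclusion to all inner vertices.

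The only delicate point is the bookkeeping in the second half: one must see that the single-step conclusion of Claim~1 can be iterated to cover \emph{all} inner vertices rather than just the neighbors of one fixed leaf. What makes the iteration legitimate is that the hypothesis is quantified over every leaf of $T$, so at each stage I may re-anchor Lemma~\ref{DM} at the freshly produced leaf neighbor $w$ and advance one edge deeper into $T'$. Combining the two halves yields that each inner vertex has exactly one leaf neighbor, which is the assertion.
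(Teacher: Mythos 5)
Your proof is correct, and while it targets the same conclusion as the paper's, it takes a somewhat different and in places tighter route. The paper offers no standalone proof: the corollary is declared a byproduct of the proof of Theorem~\ref{even}, whose Claim~1 roots $T$ at a leaf $x$ with neighbor $y$, derives $\alpha_y=\gamma_y$ from $\Psi(T)=\Psi(T-x)$ via the algorithm's $(\alpha,\beta,\gamma)$ calculus, concludes $\alpha_z=0$ (hence leaf-adjacency) for every $z\in N_T(y)\setminus\{x\}$, and then ``recursively'' re-roots at fresh leaves to sweep the tree. You replace the sign computation by a black-box appeal to the cited Lemma~\ref{DM}, which buys two things. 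First, your induction on distance from $y_0$ inside the connected subtree $T'$ of inner vertices makes precise exactly the step the paper only gestures at (``we can recursively prove\ldots''), and you correctly identify the point that legitimizes the sweep: the hypothesis is quantified over all leaves, so Lemma~\ref{DM} may be re-anchored at the freshly produced leaf $w$ at every stage. Second, you explicitly establish the ``at most one'' half, which the corollary's ``exactly one'' requires but which the paper's Claim~1 delivers only implicitly (if some $z\in N_T(y)\setminus\{x\}$ were itself a leaf, then $\alpha_z=1\neq0$; the paper never spells this out) -- your contradiction, that $x_2$'s unique neighbor $v$ is a non-leaf while the equality condition of Lemma~\ref{DM} at $x_1$ forces $x_2$ to be adjacent to a leaf, is clean. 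The supporting details all check: $y_0$ is inner because $|V(T)|\geq 3$, the subtree $T'$ is connected since a degree-one vertex cannot be internal to a path, and $v\neq w$ in the inductive step since $v$ is inner while $w$ is a leaf.
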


\begin{lem}\label{3-leaves}
If $\Psi(T)$ is minimum over all trees with $n$ ($n\geq8$) vertices, then $T$ does not have a vertex which is adjacent to at least three leaves.
\end{lem}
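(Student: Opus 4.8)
The plan is to argue by contradiction. Suppose $T$ minimizes $\Psi$ among all $n$-vertex trees but has a vertex $v$ adjacent to leaves $\ell_1,\ell_2,\ell_3,\dots,\ell_k$ with $k\ge 3$; I will build an $n$-vertex tree $T'$ with $\Psi(T')<\Psi(T)$. First I would root $T$ at $v$, so that $\Psi(T)=\alpha_v+\beta_v$ by the interpretation of the signs recorded after Algorithm~\ref{algorithm}. List the children of $v$ as the $k$ leaves together with the non-leaf neighbours $x_1,\dots,x_m$ (possibly $m=0$), and abbreviate $P:=\prod_{i=1}^{m}(\alpha_{x_i}+\beta_{x_i})$ and $Q:=\sum_{i=1}^{m}\gamma_{x_i}\prod_{i'\ne i}(\alpha_{x_{i'}}+\beta_{x_{i'}})$. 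Each leaf has sign $(1,0,1)$, so every leaf contributes a factor $\beta=0$ to the product defining $\alpha_v$; hence $\alpha_v=0$ (as also forced by Lemma~\ref{1-degree vertex}). Evaluating $\beta_v$ from Step 4, each of the $k$ leaves contributes a term equal to $P$ (its $\gamma$ equals $1$, the remaining leaves contribute factors $1$, and the $x_i$ contribute $P$), while the $x_i$ contribute a total of $Q$; this gives $\Psi(T)=\beta_v=kP+Q$.

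Next I would define $T'$ by deleting the edge $v\ell_3$ and adding the edge $\ell_1\ell_3$. This preserves the vertex set and keeps $T'$ a tree, and it leaves each subtree $T_{x_i}^{v}$ --- hence each sign $(\alpha_{x_i},\beta_{x_i},\gamma_{x_i})$ --- unchanged. In $T'$ the vertex $\ell_1$ has the single leaf child $\ell_3$, so Step 4 gives it the sign $(0,1,1)$; the children of $v$ are now the $k-2$ leaves $\ell_2,\ell_4,\dots,\ell_k$, the degree-two vertex $\ell_1$, and the $x_i$. Because $k\ge 3$ there is still at least one leaf child of $v$, so again $\alpha_v'=0$. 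Recomputing $\beta_v'$, the $k-2$ surviving leaves each contribute $P$, the vertex $\ell_1$ (with $\gamma_{\ell_1}'=1$) contributes $P$, and the $x_i$ contribute the same total $Q$; thus $\Psi(T')=\beta_v'=(k-2)P+P+Q=(k-1)P+Q$.

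Comparing the two expressions, $\Psi(T)-\Psi(T')=P$. By Remark~\ref{a+b>r} each factor obeys $\alpha_{x_i}+\beta_{x_i}\ge\gamma_{x_i}\ge 1$, so $P\ge 1$ (with the empty product equal to $1$ when $m=0$), whence $\Psi(T')<\Psi(T)$, contradicting minimality. I expect the only real pitfall to be the bookkeeping in $\beta_v'$, and this is precisely where the hypothesis of three leaves is used: after the move $v$ retains $k-2$ leaf children (not $k-1$), and the step $\alpha_v'=0$ requires $v$ to keep at least one leaf, i.e.\ $k\ge 3$. Were $v$ to have only two leaves, the same move could make $\alpha_v'$ positive and the comparison could reverse, which explains why the statement must forbid only vertices adjacent to at least three leaves.
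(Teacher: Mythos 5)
Your proposal is correct and takes essentially the same route as the paper: the identical local move (detaching one leaf of $v$ and re-hanging it from another leaf --- the paper deletes $ux_2$ and adds $x_1x_2$), the same rooting at the common neighbour, and the same sign computation via Algorithm~\ref{algorithm}, yielding a deficit of exactly one copy of the product $P$. Your bookkeeping is marginally more explicit (you keep all $k$ leaves separate and cover the star case $m=0$ via the empty product, where the paper instead observes that $N_{T}(u)\setminus\{x_1,x_2,x_3\}\neq\emptyset$ because $n\geq 8$), but this is a cosmetic difference, not a different argument.
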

\begin{proof}
By the contrary, we suppose that $T$ has three leaves $x_1, x_2, x_3$ which are adjacent to the common vertex $u$.
For tree $T$, deleting edge $ux_2$ and adding edge $x_1x_2$, we obtain a new tree, denoted by $T'$.
Applying the Algorithm \ref{algorithm}, we choose $u$ as the root in both $T$ and $T'$.
\begin{align}
\Psi(T)\!=&\alpha_u+\beta_u\notag\\
\!=&0+\sum\limits_{y\in N_{T}(u)}\prod\limits_{w\in N_{T}(u)\setminus\{y\}}(\alpha_w+\beta_w)\cdot\gamma_y\notag\\
\!=&3\prod\limits_{w\in N_{T}(u)\setminus\{x_1,x_2,x_3\}}(\alpha_w+\beta_w)+\sum\limits_{y\in N_{T}(u)\setminus\{x_1,x_2,x_3\}}\prod\limits_{w\in N_{T}(u)\setminus\{x_1,x_2,x_3,y\}}(\alpha_w+\beta_w)\cdot\gamma_y \notag
\end{align}
\begin{align}
\Psi(T')\!=&\alpha_u'+\beta_u'\notag\\
\!=&0+\sum\limits_{y\in N_{T'}(u)}\prod\limits_{w\in N_{T'}(u)\setminus\{y\}}(\alpha_w'+\beta_w')\cdot\gamma_y'\notag\\
\!=&2\prod\limits_{w\in N_{T'}(u)\setminus\{x_1,x_3\}}(\alpha_w'+\beta_w')+\sum\limits_{y\in N_{T'}(u)\setminus\{x_1,x_3\}}\prod\limits_{w\in N_{T'}(u)\setminus\{x_1,x_3,y\}}(\alpha_w'+\beta_w')\cdot\gamma_y' \notag\\
\!=&2\prod\limits_{w\in N_{T}(u)\setminus\{x_1,x_2,x_3\}}(\alpha_w+\beta_w)+\sum\limits_{y\in N_{T}(u)\setminus\{x_1,x_2,x_3\}}\prod\limits_{w\in N_{T}(u)\setminus\{x_1,x_2,x_3,y\}}(\alpha_w+\beta_w)\cdot\gamma_y \notag
\end{align}
So $\Psi(T)-\Psi(T')=\prod\limits_{w\in N_{T}(u)\setminus\{x_1,x_2,x_3\}}(\alpha_w+\beta_w)>0$ since $N_{T}(u)\setminus\{x_1,x_2,x_3\}\neq\emptyset$.
This contradicts the choice of $T$.
\end{proof}

By Lemma \ref{3-leaves}, the following conclusion is drawn.
\begin{cor}\label{d-raysstar}
For tree $T$ with $n\geq8$ vertices, if it has a pendent star $K_{1,d}$ with $d\geq4$, then $\Psi(T)$ is not minimum over all trees with $n$ vertices.
\end{cor}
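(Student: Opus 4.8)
The plan is to reduce the statement directly to Lemma~\ref{3-leaves} by unpacking what the presence of a pendant star forces about the local structure of $T$. First I would recall the definition of a pendant star from the Introduction: a pendant star $K_{1,d}$ in $T$ is a subgraph isomorphic to $K_{1,d}$ whose central vertex, say $v$, has degree $d$ both in the star and in $T$, and among whose $d$ rays exactly $d-1$ terminate in leaves of $T$. Consequently $v$ is a vertex of $T$ adjacent to at least $d-1$ leaf vertices of $T$.

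The key observation is then immediate. Since $d\geq4$ we have $d-1\geq3$, so the center $v$ of the pendant star is a vertex adjacent to at least three leaves of $T$. This is exactly the configuration that Lemma~\ref{3-leaves} excludes from any tree minimizing $\Psi$ among $n$-vertex trees (for $n\geq8$).

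Finally I would invoke the contrapositive of Lemma~\ref{3-leaves}: because $T$ contains a vertex adjacent to at least three leaves and $n\geq8$, $\Psi(T)$ cannot attain the minimum value over all $n$-vertex trees. Concretely, the tree $T'$ built in the proof of Lemma~\ref{3-leaves}, obtained by deleting the edge $vx_2$ and adding an edge between two of the three common leaves, satisfies $\Psi(T')<\Psi(T)$, which directly witnesses that $T$ is not a minimizer.

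I do not anticipate any real obstacle here; the entire content of the corollary is the translation from the pendant-star hypothesis to the ``three common leaves'' hypothesis of Lemma~\ref{3-leaves}. The only point meriting a moment's care is the bookkeeping $d-1\geq3\Leftrightarrow d\geq4$, that is, checking that the threshold $d\geq4$ in the corollary is precisely what is needed to guarantee three leaves of $T$ at $v$: a pendant star with only $d=3$ rays would supply just two leaves of $T$, which would not meet the hypothesis of Lemma~\ref{3-leaves}, so the assumption $d\geq4$ is exactly the right one.
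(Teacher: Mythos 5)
Your proof is correct and matches the paper exactly: the paper derives Corollary~\ref{d-raysstar} directly from Lemma~\ref{3-leaves}, just as you do, since the center of a pendant star $K_{1,d}$ with $d\geq4$ is by definition adjacent to $d-1\geq3$ leaves of $T$. Your closing remark about $d=3$ yielding only two leaves is also consistent with the paper, which treats the pendant $K_{1,3}$ case separately in Lemma~\ref{3-raysstar}.
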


\begin{lem}\label{3-raysstar}
Let $T$ be a tree with $n\geq8$ vertices. If $T$ has a pendent star $K_{1,3}$, then $\Psi(T)$ is not minimum over all trees with $n$ vertices.
\end{lem}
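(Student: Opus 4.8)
The plan is to mimic the transformation argument of Lemma \ref{3-leaves}: writing $c$ for the center of the pendant star $K_{1,3}$ (so $d_T(c)=3$), $x_1,x_2$ for its two leaf-neighbors, and $p$ for its third neighbor (which is \emph{not} a leaf, by the definition of a pendant star), I will build from $T$ another $n$-vertex tree $T'$ with $\Psi(T')<\Psi(T)$, so that $\Psi(T)$ cannot be minimum. I root $T$ at $p$; then $c$ is a child of $p$ whose children $x_1,x_2$ are leaves, so by Algorithm \ref{algorithm} its sign is $(\alpha_c,\beta_c,\gamma_c)=(0,2,1)$. Denoting the children of $p$ by $c,u_1,\dots,u_m$ and abbreviating $P=\prod_i(\alpha_{u_i}+\beta_{u_i})$, $Q=\prod_i\beta_{u_i}$, $W=\sum_i\gamma_{u_i}\prod_{j\neq i}(\alpha_{u_j}+\beta_{u_j})$, the Algorithm yields the master formula $\Psi(T)=\alpha_p+\beta_p=\beta_c\,Q+\gamma_c\,P+(\alpha_c+\beta_c)\,W$. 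Every move I use only rearranges $c,x_1,x_2$ (and possibly one leaf at $p$), leaving the subtrees hanging at the $u_i$ untouched, so $P,Q,W$ agree for $T$ and $T'$ and the whole comparison reduces to how the local signs at the modified children change.

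First I would treat the case in which $p$ has no leaf-neighbor. Here I take $T'$ obtained by deleting $cx_2$ and adding $x_1x_2$, i.e.\ straightening the star into the pendant path $p\,c\,x_1\,x_2$; the new sign of $c$ is $(1,1,1)$, so $(\alpha_c+\beta_c,\gamma_c)$ is unchanged while $\beta_c$ drops from $2$ to $1$. The master formula then gives $\Psi(T)-\Psi(T')=Q=\prod_i\beta_{u_i}$. Since $p$ is not a leaf it has a neighbor other than $c$, and since $p$ has no leaf-neighbor every $u_i$ is a non-leaf, whence $\beta_{u_i}>0$ by Observation \ref{a+b>0}; thus $Q>0$ and $\Psi(T')<\Psi(T)$.

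The harder case, and the main obstacle, is when $p$ \emph{does} have a leaf-neighbor $\ell$: then $\beta_\ell=0$ forces $Q=0$, so the path move above produces $\Psi(T')=\Psi(T)$ and gains nothing. Instead I relocate a star-leaf onto $\ell$: let $T'$ be obtained by deleting $cx_2$ and adding $\ell x_2$, so that in $T'$ both $c$ and $\ell$ become degree-$2$ vertices carrying one pendant leaf and hence both receive sign $(0,1,1)$. Separating $\ell$ out explicitly as a child of $p$ and recomputing gives $\Psi(T)=3P'+2W'$ and $\Psi(T')=Q''+2P'+W'$, where $P',W',Q''$ are the analogues of $P,W,Q$ taken over the children of $p$ other than $c$ and $\ell$. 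Hence $\Psi(T)-\Psi(T')=P'+W'-Q''\geq W'$, using $P'\geq Q''$ factorwise (each $\alpha_{u_i}+\beta_{u_i}\geq\beta_{u_i}$).

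Finally I must show $W'>0$. By Remark \ref{a+b>r} every $\gamma_{u_i}\geq 1$ and every $\alpha_{u_j}+\beta_{u_j}\geq\gamma_{u_j}\geq 1$, so each summand of $W'$ is at least $1$ and $W'\geq m$; it remains to rule out $m=0$. If $m=0$ then $p$ has exactly the two neighbors $c$ and $\ell$, so $T$ consists only of $p,\ell,c,x_1,x_2$ and has $5$ vertices, contradicting $n\geq 8$. Therefore $m\geq 1$, $W'\geq 1$, and $\Psi(T')<\Psi(T)$, which finishes both cases. The only genuinely delicate points are noticing that a leaf at $p$ defeats the naive straightening move, designing the leaf-relocation move that still strictly decreases $\Psi$, and the short order count that forces $p$ to carry an additional branch once $n\geq 8$.
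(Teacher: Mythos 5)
Your proof is correct; I checked the sign computations (the center's sign $(0,2,1)$ in $T$, $(1,1,1)$ after straightening, $(0,1,1)$ for both $c$ and $\ell$ after the relocation, and the resulting identities $\Psi(T)-\Psi(T')=Q$ in Case 1 and $\Psi(T)-\Psi(T')=P'+W'-Q''\geq W'$ in Case 2) and they all hold, with no circularity since you only invoke Algorithm \ref{algorithm}, Observation \ref{a+b>0} and Remark \ref{a+b>r}. But your route is genuinely different from the paper's. The paper uses a \emph{single} transformation: with center $x$, star leaves $y_1,y_2$ and non-leaf neighbor $z$ (your $c$, $x_1,x_2$, $p$), it deletes $xy_1$ and adds $zy_1$ — i.e.\ it relocates a star leaf onto $p$ itself, not onto a leaf-neighbor of $p$ — roots both trees at the moved leaf $y_1$, and obtains $\Psi(T)-\Psi(T')=(\alpha_z+\beta_z-\gamma_z)+\alpha_z$; strictness then requires a case analysis on $d_T(z)$: if $d_T(z)=2$ then $\alpha_z>0$ (here $n\geq8$ guarantees the unique child of $z$ heads a subtree with at least $4$ vertices, so is not a leaf), while if $d_T(z)\geq3$ the equality clause of Remark \ref{a+b>r} gives $\alpha_z+\beta_z>\gamma_z$. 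Your scheme instead roots at $p$, isolates the master formula $\Psi(T)=\beta_c Q+\gamma_c P+(\alpha_c+\beta_c)W$, and deploys two different moves according to whether $p$ has a leaf-neighbor, so that each strictness argument becomes a mechanical count ($Q>0$ via Observation \ref{a+b>0}; $W'\geq m'\geq1$ once $n\geq8$ rules out the five-vertex configuration). What the paper's approach buys is uniformity — one move, one computation, mirroring the template of Lemma \ref{3-leaves} — at the cost of the more delicate equality analysis; your approach trades that for a dichotomy whose two moves are each trivially strict, and your diagnosis that the naive straightening stalls exactly when $p$ carries a leaf nicely reflects the extremal trees $S^{1}$ and $S^{2}$, whose branch vertex does carry a leaf. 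One minor remark: in your Case 1 the appeal to $m\geq1$ is not even needed, since $m=0$ would make $Q=1>0$ as an empty product (and is in any case impossible for $n\geq8$).
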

\begin{proof}
We suppose that $T$ has a pendent star $K_{1,3}$, and the center of the star is $x$ with three leaves being $y_1, y_2, z$, where $z$ is not leave in $T$.
$T'$ is a new tree obtained from $T$ by deleting edge $xy_1$ and adding edge $zy_1$. We let the two trees both rooted at $y_1$.
By the Algorithm \ref{algorithm}, we have
$\Psi(T)=\alpha_{y_1}+\beta_{y_1}=\beta_x+\gamma_x=(\alpha_z+\beta_z)+\gamma_z+(\alpha_z+\beta_z)$.
\begin{align}
\Psi(T')\!=&\alpha_{y_1}'+\beta_{y_1}'=\beta_z'+\gamma_z'\notag\\
\!=&\sum\limits_{w\in N_{T'}(z)\setminus\{y_1\}}\prod\limits_{u\in N_{T'}(z)\setminus\{y_1,w\}}(\alpha_u'+\beta_u')\cdot\gamma_w'+\prod\limits_{w\in N_{T'}(z)\setminus\{y_1\}}(\alpha_w'+\beta_w')\notag\\
\!=&\prod\limits_{u\in N_{T'}(z)\setminus\{y_1,x\}}(\alpha_u'+\beta_u')+\sum\limits_{w\in N_{T'}(z)\setminus\{y_1,x\}}\prod\limits_{u\in N_{T'}(z)\setminus\{y_1,x,w\}}(\alpha_u'+\beta_u')\cdot\gamma_w'\notag\\
\!&+\prod\limits_{w\in N_{T'}(z)\setminus\{y_1,x\}}(\alpha_w'+\beta_w')\notag\\
\!=&\prod\limits_{u\in N_{T}(z)\setminus\{x\}}(\alpha_u+\beta_u)+\sum\limits_{w\in N_{T}(z)\setminus\{x\}}\prod\limits_{u\in N_{T}(z)\setminus\{x,w\}}(\alpha_u+\beta_u)\cdot\gamma_w\notag\\
\!&+\prod\limits_{w\in N_{T}(z)\setminus\{x\}}(\alpha_w+\beta_w)\notag\\
\!=&\gamma_z+\beta_z+\gamma_z\notag
\end{align}
So $\Psi(T)-\Psi(T')=(\alpha_z+\beta_z-\gamma_z)+\alpha_z$.
We note that the degree of $z$ is at least two and tree $T\setminus\{y_1,y_2,x\}$ has at least $5$ vertices. If the degree of $z$ is two in $T$, then $\alpha_z>0$, and if the degree of $z$ is at least $3$ in $T$, then $\alpha_z+\beta_z>\gamma_z$ by the Remark \ref{a+b>r}.
To sum up, $\Psi(T)-\Psi(T')>0$, this implies that $\Psi(T)$ is not minimum.
\end{proof}

\begin{lem}\label{Path-4}
For tree $T$ with $n\geq8$ vertices, if it has a pendent path $P_t$ with $t\geq4$, then $\Psi(T)$ is not minimum over all trees with $n$ vertices.
\end{lem}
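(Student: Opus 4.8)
The plan is to show that $\Psi(T)$ is not minimal by producing a single tree $T'$ on the same $n$ vertices with $\Psi(T')<\Psi(T)$, obtained by one local surgery on the pendant path, and to read off the effect on $\Psi$ from Algorithm \ref{algorithm}. Write the pendant path as $w u_1 u_2 \cdots u_{t-1}$, where $w$ is the branch vertex (so $d_T(w)\ge 3$), $u_{t-1}$ is the leaf, and $u_1,\dots,u_{t-2}$ have degree two, and root both trees at $w$. It is convenient to abbreviate $p_k:=\Psi(P_k)$, so that the recurrence coming from Step 4 reads $p_k=p_{k-2}+p_{k-3}$ (with boundary values $p_{-2}=1$, $p_{-1}=0$, $p_0=p_1=p_2=1$), and the end vertex of a rooted path on $m$ vertices carries the sign $(p_{m-3},p_{m-2},p_{m-1})$. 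Denoting the remaining children of $w$ by $c_1,\dots,c_r$ (so $r=d_T(w)-1\ge 2$) and setting $\Pi:=\prod_i\beta_{c_i}$, $Q:=\prod_i(\alpha_{c_i}+\beta_{c_i})$ and $R:=\sum_i\bigl(\prod_{j\ne i}(\alpha_{c_j}+\beta_{c_j})\bigr)\gamma_{c_i}$, expanding $\alpha_w$ and $\beta_w$ via Step 4 gives $\Psi(T)=p_{t-3}\Pi+p_{t-2}Q+p_{t-1}R$.

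For the surgery I take $T'$ obtained from $T$ by deleting the edge $u_2u_3$ and adding the edge $wu_3$; this is again a tree on $n$ vertices, and it replaces the long leg by a length-two pendant path $wu_1u_2$ together with the shorter pendant path $wu_3\cdots u_{t-1}$. Running the same expansion at $w$ in $T'$ yields $\Psi(T')=p_{t-5}\Pi+(p_{t-3}+p_{t-4})Q+p_{t-3}R$, and, after using $p_k=p_{k-2}+p_{k-3}$ to simplify the three coefficient differences, the comparison collapses neatly to $\Psi(T)-\Psi(T')=p_{t-6}(\Pi-Q)+p_{t-4}R$.

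It then remains to prove this quantity is strictly positive, and the engine is the inequality $Q-\Pi<R$. I would obtain it from the telescoping identity $Q-\Pi=\sum_i\bigl(\prod_{j<i}\beta_{c_j}\bigr)\alpha_{c_i}\bigl(\prod_{j>i}(\alpha_{c_j}+\beta_{c_j})\bigr)$, bounding each summand using $\beta_{c_j}\le\alpha_{c_j}+\beta_{c_j}$ together with $\alpha_{c_i}\le\gamma_{c_i}$ from Lemma \ref{ab}; this gives $Q-\Pi\le R$, and strictness follows because $r\ge 2$ forces at least one extra positive summand in $R$ that is absent from the telescoping sum. Granting $\Pi-Q>-R$, one finishes by cases on $p_{t-6}$: when $p_{t-6}=0$ (only $t=5$) the difference equals $p_{t-4}R=R>0$ since $R\ge r\ge 2$; when $p_{t-6}>0$ one gets $\Psi(T)-\Psi(T')>p_{t-6}(-R)+p_{t-4}R=(p_{t-4}-p_{t-6})R\ge 0$, where $p_{t-4}\ge p_{t-6}$ holds for every relevant $t$. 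In either case $\Psi(T')<\Psi(T)$, so $\Psi(T)$ is not minimum.

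The main obstacle is precisely the inequality $Q-\Pi<R$: this is the only place where the structure of $T$ away from the pendant path intervenes, and it is what forces the combined use of $\alpha_{c_i}\le\gamma_{c_i}$ (Lemma \ref{ab}) and the branching hypothesis $r\ge 2$. A secondary technical nuisance is the boundary behaviour of the $p$-recurrence at small indices — the coefficient $p_{t-6}$ vanishing at $t=5$, and $p_{t-4}-p_{t-6}$ vanishing at $t=4$ and $t=6$ — which is why the final positivity must be argued by the two cases above rather than being read off from a single monotone estimate.
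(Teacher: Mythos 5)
Your proposal is correct in substance, and it takes a genuinely different route from the paper's. The paper roots $T$ at the pendant leaf $x_1$ of the path $P_t=x_1x_2\cdots x_t$ and performs up to two surgeries along the path itself: first delete $x_1x_2$ and add $x_4x_1$, giving $\Psi(T)-\Psi(T')=\alpha_{x_4}+\beta_{x_4}-\gamma_{x_4}\geq0$; when equality can occur (namely when $x_4$ is a leaf of $T-\{x_1,x_2,x_3\}$, which forces $t\geq5$) it switches to a second surgery (delete $x_2x_3$, add $x_2x_5$), whose difference is $\beta_{x_5}>0$ by Observation \ref{a+b>0}. You instead root at the branch vertex $w$, perform a single surgery (splitting the leg into legs on $2$ and $t-3$ vertices at $w$), and exploit the closed form $(p_{m-3},p_{m-2},p_{m-1})$ of path signs to collapse the difference to $p_{t-6}(\Pi-Q)+p_{t-4}R$. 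I checked the sign values (including the negative-index conventions), both expansions at $w$, the coefficient collapse via $p_k=p_{k-2}+p_{k-3}$ at the boundary cases $t=4,5$, the inequality $Q-\Pi\leq R$ by telescoping with Lemma \ref{ab} and Remark \ref{a+b>r}, and the final case split on $p_{t-6}$ and $p_{t-4}-p_{t-6}$; all of this is sound, and $R\geq r\geq2$ holds as you say. The trade-off: the paper's comparison lives entirely on the path, with the rest of the tree absorbed into the sign of a single vertex, at the cost of a conditional second surgery; yours is a single uniform surgery with an explicit difference formula, at the cost of needing a structural inequality at the branch vertex.

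One step needs repair, though the fix is short: your justification of strictness in $Q-\Pi<R$ (``$r\geq2$ forces at least one extra positive summand in $R$ that is absent from the telescoping sum'') is wrong as stated, since both sums have exactly $r$ summands, one per child $c_i$ --- there is no extra summand. Strictness nevertheless holds for $r\geq2$: if $Q-\Pi=R$, then each telescoping summand equals the corresponding $R$-summand (as each is bounded by it). Since every $\alpha_{c_j}+\beta_{c_j}\geq\gamma_{c_j}\geq1$ by Remark \ref{a+b>r}, equality in summand $1$ forces $\alpha_{c_1}=\gamma_{c_1}\geq1$, while equality in summand $2$ forces $\beta_{c_1}\alpha_{c_2}=(\alpha_{c_1}+\beta_{c_1})\gamma_{c_2}$, which is impossible because $\beta_{c_1}<\alpha_{c_1}+\beta_{c_1}$ (as $\alpha_{c_1}\geq1$) and $\alpha_{c_2}\leq\gamma_{c_2}$ with $\gamma_{c_2}\geq1$. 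With this two-line patch your argument is complete.
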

\begin{proof}
If $\Psi(T)$ is minimum, by Lemma \ref{S222}, $T$ is not isomorphic to a path. So $T$ has one vertex with degree at least $3$.
By the contrary, we suppose that $T$ has a pendent path $P_t$ with $t\geq4$, and let $P_t=x_1x_2x_3\cdots x_t$ with $x_1$ being a leaf in $T$.
We construct a new tree $T'$ by deleting edge $x_1x_2$ and adding edge $x_4x_1$.
Let $x_1$ be the root in both $T$ and $T'$.
By the Algorithm \ref{algorithm}, we have
$\Psi(T)=\alpha_{x_1}+\beta_{x_1}=\alpha_{x_4}+2\beta_{x_4}+\gamma_{x_4}$ and
\begin{align}
\Psi(T')\!=&\alpha_{x_1}'+\beta_{x_1}'=\beta_{x_4}'+\gamma_{x_4}'\notag\\
\!=&\prod\limits_{w\in N_{T'}(x_4)\setminus\{x_3,x_1\}}(\alpha_w'+\beta_w')+\sum\limits_{w\in N_{T'}(x_4)\setminus\{x_1,x_3\}}\prod\limits_{u\in N_{T'}(x_4)\setminus\{x_1,x_3,w\}}(\alpha_u'+\beta_u')\cdot\gamma_w'\notag\\
&+\prod\limits_{w\in N_{T'}(x_4)\setminus\{x_1,x_3\}}(\alpha_w'+\beta_w')\notag\\
\!=&\prod\limits_{w\in N_{T}(x_4)\setminus\{x_3\}}(\alpha_w+\beta_w)+\sum\limits_{w\in N_{T}(x_4)\setminus\{x_3\}}\prod\limits_{u\in N_{T}(x_4)\setminus\{x_3,w\}}(\alpha_u+\beta_u)\cdot\gamma_w\notag\\
&+\prod\limits_{w\in N_{T}(x_4)\setminus\{x_3\}}(\alpha_w+\beta_w)\notag\\
\!=&\gamma_{x_4}+\beta_{x_4}+\gamma_{x_4}\notag
\end{align}
So $\Psi(T)-\Psi(T')=\alpha_{x_4}+\beta_{x_4}-\gamma_{x_4}\geq0$.
If $x_4$ is not a leaf in $T-\{x_1,x_2,x_3\}$, then $\Psi(T)-\Psi(T')>0$.
If $x_4$ is a leaf in $T-\{x_1,x_2,x_3\}$, then $t\geq5$, and we can construct a new tree $T''$ by deleting edge $x_3x_2$ and adding edge $x_2x_5$.
Let $x_1$ be the root in both $T$ and $T''$.
By the Algorithm \ref{algorithm}, we have
$\Psi(T)=\alpha_{x_4}+2\beta_{x_4}+\gamma_{x_4}=\beta_{x_5}+2\gamma_{x_5}+(\alpha_{x_5}+\beta_{x_5})$ and
\begin{align}
\Psi(T'')\!=&\alpha_{x_1}''+\beta_{x_1}''=\gamma_{x_5}''+(\alpha_{x_5}''+\beta_{x_5}'')\notag\\
\!=&\prod\limits_{w\in N_{T''}(x_5)\setminus\{x_2,x_4\}}(\alpha_w''+\beta_w'')+\prod\limits_{w\in N_{T''}(x_5)\setminus\{x_2,x_4\}}\beta_w''
+\prod\limits_{w\in N_{T''}(x_5)\setminus\{x_2,x_4\}}(\alpha_w''+\beta_w'')\notag\\
&+\sum\limits_{w\in N_{T''}(x_5)\setminus\{x_2,x_4\}}\prod\limits_{u\in N_{T''}(x_5)\setminus\{x_2,x_4,w\}}(\alpha_u''+\beta_u'')\cdot\gamma_w''\notag\\
\!=&\prod\limits_{w\in N_{T}(x_5)\setminus\{x_4\}}(\alpha_w+\beta_w)+\prod\limits_{w\in N_{T}(x_5)\setminus\{x_4\}}\beta_w
+\prod\limits_{w\in N_{T}(x_5)\setminus\{x_4\}}(\alpha_w+\beta_w)\notag\\
&+\sum\limits_{w\in N_{T}(x_5)\setminus\{x_4\}}\prod\limits_{u\in N_{T}(x_5)\setminus\{x_4,w\}}(\alpha_u+\beta_u)\cdot\gamma_w\notag\\
\!=&\gamma_{x_5}+\alpha_{x_5}+\gamma_{x_5}+\beta_{x_5}\notag
\end{align}
So $\Psi(T)-\Psi(T'')=\beta_{x_5}>0$ by Observation \ref{a+b>0} and $n\geq8$.
All the above discussions show that $\Psi(T)$ is not minimum.
\end{proof}

\begin{thm}\label{degree-3-num}
For tree $T$ with $n\geq8$ vertices, if $\Psi(T)$ is minimum over all trees with $n$ vertices, then $T$ has exactly one branch vertex.
\end{thm}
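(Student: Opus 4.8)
The plan is to argue by contradiction. Suppose $T$ attains the minimum of $\Psi$ over all trees on $n\ge 8$ vertices yet has two or more branch vertices; I will exhibit a modification of $T$ that strictly decreases $\Psi$, contradicting minimality. First I would record what is already known about a minimum tree: by Lemmas \ref{3-leaves} and \ref{3-raysstar}, Corollary \ref{d-raysstar} and Lemma \ref{Path-4}, $T$ has no pendant star $K_{1,d}$ with $d\ge3$ and no pendant path on four or more vertices, so every pendant path of $T$ has at most two edges. Root $T$ at a leaf and let $b$ be a branch vertex of maximum depth. Every child-subtree of $b$ is then a single leaf (sign $(1,0,1)$) or a cherry (sign $(0,1,1)$), so Step~4 of Algorithm \ref{algorithm} gives $b$ the sign $(\alpha_b,\beta_b,\gamma_b)=(\varepsilon,\,d_b-1,\,1)$, with $\varepsilon=0$ precisely when $b$ has a leaf child; in particular $\gamma_b=1$ is as small as Remark \ref{a+b>r} permits.

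The modification is a \emph{merge}. Pick a branch vertex $w\ne b$ joined to $b$ by a path $b=p_0,p_1,\dots,p_t=w$ with all interior vertices of degree $2$, detach every child-subtree of $b$, and re-attach each of them directly to $w$; then $b$ becomes a leaf, the vertices $p_0,\dots,p_{t-1}$ form a bare pendant path at $w$, the order is unchanged, and the number of branch vertices drops by one. Rooting both trees at $w$ and abbreviating, for the children $c_1,\dots,c_r$ of $w$ other than the $b$-direction, $P=\prod_j\beta_{c_j}$, $S=\prod_j(\alpha_{c_j}+\beta_{c_j})$ and $B=\sum_j\gamma_{c_j}\prod_{k\ne j}(\alpha_{c_k}+\beta_{c_k})$, Algorithm \ref{algorithm} expresses $\Psi(T)$ and $\Psi(T')$ through the signs of the top $q=p_{t-1}$ of the $b$-direction and of its bare-path replacement $q'$. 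Since the degree-$2$ lift $(\alpha,\beta,\gamma)\mapsto(\beta,\gamma,\alpha+\beta)$ is linear, $q-q'$ is this lift iterated on $b-(1,0,1)$, and the whole difference collapses (for $\varepsilon=1$) to
\[
\Psi(T)-\Psi(T')=(d_b-1)\big[(P-S)\beta^{(t-1)}+B(\alpha^{(t-1)}+\beta^{(t-1)})\big],
\]
where $(\alpha^{(t-1)},\beta^{(t-1)},\gamma^{(t-1)})$ is the sign of a path on $t-1$ vertices. The bracket is nonnegative as soon as $B\ge S-P$, an inequality I would obtain by comparing the expansions of $S-P$ and of $\sum_j\alpha_{c_j}\prod_{k\ne j}(\alpha_{c_k}+\beta_{c_k})$ term by term and invoking Lemma \ref{ab} ($\alpha_{c_j}\le\gamma_{c_j}$); strict positivity then follows from $\gamma_{c_j}\ge1$ and Observation \ref{a+b>0}.

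The real obstacle is hidden in $S$: the argument above forces $\Psi$ down only when the product $S$ over $w$'s other subtrees is small, and if $w$ carries two or more branch-containing subtrees besides the $b$-direction then $S$ is large and the same merge can \emph{increase} $\Psi$ (this already happens for three small spiders hung on a common centre). Hence $w$ must be chosen so that all its subtrees apart from the $b$-direction are pendant paths of length at most $2$. I would guarantee such a $w$ by working in the tree $R$ whose vertices are the branch vertices of $T$, two being adjacent when joined in $T$ by a path free of interior branch vertices: a longest-path argument shows $R$ always has either a leaf whose $R$-neighbour has degree $\le2$, or a vertex adjacent to two leaves. In the first case one merges the tip into that neighbour; in the second one relocates the light subtrees of one leaf-tip onto the other. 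In each configuration the target is surrounded by light subtrees, $S$ is controlled, and the displayed difference is strictly positive. Establishing this dichotomy together with the parallel (and heavier) bookkeeping for the case $\varepsilon=0$ — where an extra nonnegative term $\beta_qP$ only helps — is the part I expect to require the most care.
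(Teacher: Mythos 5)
Your core computation is sound, and it is a genuinely different device from the paper's: the paper also isolates an extremal branch vertex $x$ (one whose deletion leaves exactly one component containing branch vertices, so all other components are $P_1$ or $P_2$), but it shifts the light subtrees only one step, onto the neighbour $y$ on the heavy side, getting $\Psi(T)-\Psi(T')=(d_T(x)-2)(\alpha_y+\beta_y-\gamma_y)\ge 0$ in three cases according to the number of leaf neighbours of $x$, and resolves the equality case (characterized via Lemma \ref{DM}/Remark \ref{a+b>r}) by a second move onto $z$. I checked your displayed identity (with $(\alpha^{(0)},\beta^{(0)},\gamma^{(0)})=(0,1,0)$ read formally when $t=1$, a case you should state), and $B\ge S-P$ does follow from the expansion $S-P=\sum_{\emptyset\ne A}\prod_{j\in A}\alpha_{c_j}\prod_{k\notin A}\beta_{c_k}$ and Lemma \ref{ab}. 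But your own formula then refutes your final paragraph: the bracket equals $B\alpha^{(t-1)}+\bigl(B-(S-P)\bigr)\beta^{(t-1)}\ge 0$ \emph{unconditionally}, and since $w$ is a branch vertex it has $r\ge2$ other children, whence $B>S-P$ strictly (equality would force $\gamma_{c_j}=\alpha_{c_j}\ge1$ for all $j$, and then the $|A|=r$ term above, with weight $|A|-1=r-1\ge1$, is positive), so for $\varepsilon=1$ the merge strictly decreases $\Psi$ for \emph{every} admissible $w$, however heavy its other subtrees. Your ``three small spiders'' example is miscalculated: for $w$ joined to three spiders $(1,2,1)$ one gets $\Psi(T)=35$ and $\Psi(T')=33$, a decrease, matching the formula with $P=4$, $S=9$, $B=6$, $t=1$. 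Consequently the reduced-tree-$R$ dichotomy and the unspecified ``relocation'' move — the place where you park most of the remaining proof obligations, with no computations — are an unnecessary detour resting on a false premise.

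The genuine gap is the case $\varepsilon=0$, which you dismiss with ``an extra nonnegative term only helps.'' That claim is false: take $w$ with two leaf children and $b$ adjacent to $w$, where $b$ has one leaf child and one cherry; then $b$ has sign $(0,2,1)$, $\Psi(T)=5$, while after the merge $w$ carries four leaves and one cherry and $\Psi(T')=5$ — equality, not a strict decrease. (This example has $n=7$; you give no argument that strictness holds for all configurations with $n\ge8$.) So, exactly as in the paper's Cases 1--3, the nonstrict cases of the $\varepsilon=0$ merge must be characterized and then broken by a second, structurally different move (the paper's $T''$, reattaching at the vertex $z$ below $y$, where $\beta_z>0$ by Observation \ref{a+b>0} clinches strictness); your proposal contains no analogue of this step. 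Note also that for $\varepsilon=0$ the decomposition is $b-(1,0,1)=(d_b-1)(0,1,0)-(1,0,0)$, so the lifted difference $q-q'$ acquires a \emph{negative} component, and in $T'$ the transplanted leaf kills $\alpha_w'$; the bookkeeping is genuinely different from the $\varepsilon=1$ case, not a routine copy. Until these equality cases are identified and eliminated, the argument does not close.
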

\begin{proof}
By Lemma \ref{S222}, $T$ is not a path since $\Psi(T)$ is minimum. So $T$ has at least one branch vertex.
We suppose, on the contrary, that $T$ has at least two branch vertices.
We choose one branch vertex, denoted by $x$, such that $T-x$ has exactly one connected component which contains at least one branch vertex of $T$, denote this component by $G$ and the only neighbor of $x$ in $G$ by $y$.
By Lemmas \ref{3-raysstar}, \ref{Path-4} and Corollary \ref{d-raysstar}, the other connected components of $T-x$ are isomorphic to $P_2$ or $P_1$.
And $T-x$ has at most two components each of which is isomorphic to $P_1$ by Lemma \ref{3-leaves}, has at least one component which is isomorphic to $P_2$ by lemma \ref{3-raysstar}.
Now, we consider the following three cases.

\textbf{Case 1.} $x$ has exactly one neighbor with degree one.

We denote the only leaf vertex which is adjacent to $x$ by $u$. Deleting all the edges $\{xw|w\in N_T(x)\setminus\{u,y\}\}$ and adding edges $\{yw|w\in N_T(x)\setminus\{u,y\}\}$, we obtain a new graph $T'$.
Let $T$ and $T'$ rooted at $u$, by the Algorithm \ref{algorithm}, we have
$\Psi(T)=\alpha_u+\beta_u=\beta_x+\gamma_x=(d_T(x)-2)(\alpha_y+\beta_y)+\gamma_y+(\alpha_y+\beta_y)$ and
\begin{align}
\Psi(T')\!=&\alpha_{u}'+\beta_{u}'=\beta_{x}'+\gamma_x'=\gamma_{y}'+(\alpha_y'+\beta_{y}')\notag\\
\!=&\prod\limits_{w\in N_{T'}(y)\setminus\{x\}}(\alpha_w'+\beta_w')+\prod\limits_{w\in N_{T'}(y)\setminus\{x\}}\beta_w'+\sum\limits_{w\in N_{T'}(y)\setminus\{x\}}\prod\limits_{z\in N_{T'}(y)\setminus\{x,w\}}(\alpha_z'+\beta_z')\cdot\gamma_w'\notag\\
\!=&\prod\limits_{w\in N_{T}(y)\setminus\{x\}}(\alpha_w+\beta_w)+\prod\limits_{w\in N_{T}(y)\setminus\{x\}}\beta_w+(d_T(x)-2)\cdot\prod\limits_{z\in N_{T}(y)\setminus\{x\}}(\alpha_z+\beta_z)\notag\\
&+\sum\limits_{w\in N_{T}(y)\setminus\{x\}}\prod\limits_{z\in N_{T}(y)\setminus\{x,w\}}(\alpha_z+\beta_z)\cdot\gamma_w\notag\\
\!=&\gamma_y+\alpha_y+(d_T(x)-2)\cdot\gamma_y+\beta_y\notag
\end{align}
So $\Psi(T)-\Psi(T')=(\alpha_y+\beta_y-\gamma_y)\cdot(d_T(x)-2)\geq0$.
If $y$ has degree one in the root subtree $T_y^{u}$ of $T$ and (suppose that $z$ is the only neighbor of $y$ in $T_y^{u}$) any neighbor of $z$ in $T_y^{u}-y$ is adjacent to a leaf vertex, then
$\Psi(T)=\Psi(T')$, and
we consider a new tree $T''$ which is obtained by deleting all edges $\{xw|w\in N_T(x)\setminus y\}$ and adding all edges $\{zw|w\in N_T(x)\setminus y\}$. Otherwise, $\Psi(T)-\Psi(T')>0$.
Let the roots of both $T''$ and $T$ be $u$.
We have $\Psi(T)=(d_T(x)-1)(\alpha_y+\beta_y)+\gamma_y=(d_T(x)-1)(\beta_z+\gamma_z)+(\alpha_z+\beta_z)$ and
\begin{align}
\Psi(T'')\!=&\alpha_{u}''+\beta_{u}''=\beta_{z}''+\gamma_z''\notag\\
\!=&\sum\limits_{w\in N_{T''}(z)\setminus\{u\}}\prod\limits_{v\in N_{T''}(z)\setminus\{u,w\}}(\alpha_v''+\beta_v'')\cdot\gamma_w''+\prod\limits_{w\in N_{T''}(z)\setminus\{u\}}(\alpha_w''+\beta_w'')\notag\\
\!=&(d_T(x)-1)\cdot\prod\limits_{w\in N_{T}(z)\setminus\{y\}}(\alpha_w+\beta_w)+\sum\limits_{w\in N_{T}(z)\setminus\{y\}}\prod\limits_{v\in N_{T}(z)\setminus\{y,w\}}(\alpha_v+\beta_v)\cdot\gamma_w\notag\\
&+\prod\limits_{w\in N_{T}(z)\setminus\{y\}}(\alpha_w+\beta_w)\notag\\
\!=&(d_T(x)-1)\cdot\gamma_z+\beta_z+\gamma_z
\end{align}
So $\Psi(T)-\Psi(T'')=(d_T(x)-2)\cdot\beta_z+(\alpha_z+\beta_z-\gamma_z)>0$ since $\beta_z>0$.

\textbf{Case 2.} $x$ has exactly two neighbors each of which has degree one.

Let $u_1$ and $u_2$ be the two neighbors of $x$, each of which has degree one.
Deleting all the edges $\{xw|w\in N_T(x)\setminus\{u_1,y\}\}$ and adding edges $\{yw|w\in N_T(x)\setminus\{u_1,y\}\}$, we obtain a new graph $T^*$.
Let both $T$ and $T^*$ rooted at $u_2$, by the Algorithm \ref{algorithm}, as the proof of Case 1 we have
$\Psi(T)=(d_T(x)-1)\cdot(\alpha_y+\beta_y)+\gamma_y$
and $\Psi(T^*)=(d_T(x)-2)\cdot\gamma_y+\beta_y+\gamma_y$.
So $\Psi(T)-\Psi(T^*)=(d_T(x)-2)\cdot(\alpha_y+\beta_y-\gamma_y)+\alpha_y$.
If $\alpha_y>0$ or $\beta_y>\gamma_y$, then $\Psi(T)>\Psi(T^*)$. If $\alpha_y=0$ and $\beta_y=\gamma_y$, then $y$ is adjacent to a leaf point (say $z$) in $T_y^{u_2}$, and
\begin{align}
\beta_y\!=&\sum\limits_{w\in N_{T}(y)\setminus\{x\}}\prod\limits_{u\in N_{T}(y)\setminus\{x,w\}}(\alpha_u+\beta_u)\cdot\gamma_w\notag\\
\!=&\prod\limits_{u\in N_{T}(y)\setminus\{x,z\}}(\alpha_u+\beta_u)+\sum\limits_{w\in N_{T}(y)\setminus\{x,z\}}\prod\limits_{u\in N_{T}(y)\setminus\{x,w\}}(\alpha_u+\beta_u)\cdot\gamma_w\notag\\
\!=&\gamma_y+\sum\limits_{w\in N_{T}(y)\setminus\{x,z\}}\prod\limits_{u\in N_{T}(y)\setminus\{x,w\}}(\alpha_u+\beta_u)\cdot\gamma_w\notag
\end{align}
So $\sum\limits_{w\in N_{T}(y)\setminus\{x,z\}}\prod\limits_{u\in N_{T}(y)\setminus\{x,w\}}(\alpha_u+\beta_u)\cdot\gamma_w=0$, this contradicts to $N_{T}(y)\setminus\{x,z\}\neq\emptyset$.\\

\textbf{Case 3.} $x$ does not have neighbor with degree one.

Deleting all the edges $\{xw|w\in N_T(x)\setminus\{y\}\}$ and adding edges $\{yw|w\in N_T(x)\setminus\{y\}\}$, we obtain a new tree, denoted by $T'$.
Let both $T$ and $T'$ rooted at $x$, by the Algorithm \ref{algorithm}, as the proof of Case 1 we have
$\Psi(T)=\alpha_x+\beta_x=\beta_y+[d_T(x)-1]\cdot(\alpha_y+\beta_y)+\gamma_y$, and
$\Psi(T')=\alpha_x'+\beta_x'=\beta_y'+\gamma_y'=[d_T(x)-1]\cdot\gamma_y+\beta_y+\gamma_y$.
So $\Psi(T)-\Psi(T')=[d_T(x)-1]\cdot(\alpha_y+\beta_y-\gamma_y)\geq0$.
By Lemma \ref{DM}, if $\Psi(T)=\Psi(T')$, then in the subtree $T_y^{x}$, $y$ is a leave and except for $y$, any neighbor of $z$ (the only neighbor of $y$ in $T_y^{x}$) is adjacent to at least one leave vertex.
In this case, we consider a new tree $T''$ which is obtained from $T$ by deleting all the edges $\{xw|w\in N_T(x)\setminus\{y\}\}$ and adding edges $\{zw|w\in N_T(x)\setminus\{y\}\}$.
Let both $T$ and $T''$ rooted at $x$, by the Algorithm \ref{algorithm}, as the proof of Case 1 we have
$\Psi(T)=\beta_y+[d_T(x)-1]\cdot(\alpha_y+\beta_y)+\gamma_y=[d_T(x)-1]\cdot\beta_z+d_T(x)\cdot\gamma_z+(\alpha_z+\beta_z)$, and
$\Psi(T'')=\alpha_x''+\beta_x''=\beta_y''+\gamma_y''=\gamma_z''+(\alpha_z''+\beta_z'')=\gamma_z+\alpha_z+[d_T(x)-1]\cdot\gamma_z+\beta_z$.
So $\Psi(T)-\Psi(T'')=[d_T(x)-1]\cdot\beta_z>0$.

All the above discussions show that $T$ has exactly one branch vertex.
\end{proof}
Now, we prove the following main result.
\begin{thm}
For any tree $T$ with $n$ vertices, $\Psi(T)\geq\lceil\frac{n}{2}\rceil$. Moreover, the equality holds if and only if $T\cong S^{0}(K_{1,\frac{n-1}{2}})$ or $S^{2}(K_{1,\frac{n+1}{2}})$ if $n$ is odd, $T\cong S^{1}(K_{1,\frac{n}{2}})$ if $n$ is even.
\end{thm}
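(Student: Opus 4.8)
The plan is to treat the statement as an extremal problem and pin down the minimizers exactly, since essentially every structural reduction needed has already been established. First I would dispose of the small cases $n\le 7$ by direct inspection; here the families $S^{0},S^{1},S^{2}$ degenerate to short paths (for instance $S^{1}(K_{1,2})\cong P_{4}$, $S^{0}(K_{1,1})\cong P_{3}$), so one simply lists all trees of each order and computes $\Psi$ via Algorithm \ref{algorithm}. This anchors both the bound and the characterization in the range where the later lemmas, which all require $n\ge 8$, do not apply.

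For $n\ge 8$ I would fix a tree $T$ attaining the minimum of $\Psi$ among all $n$-vertex trees and determine its shape from the reduction lemmas. By Theorem \ref{degree-3-num} such a $T$ has exactly one branch vertex, hence is a spider with centre $u$ whose legs are pendent paths. The key observation is that a leg consisting of three edges is a pendent path on four vertices, i.e.\ a $P_{4}$; thus Lemma \ref{Path-4} forbids it and every leg of $T$ has length one or two. Moreover Lemma \ref{3-leaves} prevents $u$ from being adjacent to three or more leaves, so $T$ has at most two legs of length one. Writing $p\in\{0,1,2\}$ for the number of length-one legs and $q$ for the number of length-two legs, we get $n=1+p+2q$, and $T$ is forced to be $S^{0}(K_{1,q})$, $S^{1}(K_{1,q+1})$ or $S^{2}(K_{1,q+2})$ according as $p=0,1,2$.

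It then remains to evaluate $\Psi$ on these three families and read off the characterization. Rooting at $u$ and running Algorithm \ref{algorithm}, a length-one leg gives its child the sign $(1,0,1)$ and a length-two leg gives its upper child the sign $(0,1,1)$; since every child then satisfies $\alpha+\beta=1$ and $\gamma=1$, one obtains $\beta_{u}=p+q$, together with $\alpha_{u}=1$ when $p=0$ and $\alpha_{u}=0$ otherwise, so $\Psi(T)=\alpha_{u}+\beta_{u}=\lceil n/2\rceil$ in each case (this is exactly Lemma \ref{S222}). Parity then selects the admissible $p$: for even $n$ only $p=1$ can occur, and for odd $n$ only $p\in\{0,2\}$, which recovers precisely the three claimed extremal trees and no others of the wrong parity. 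Consequently the minimum value over all $n$-vertex trees is $\lceil n/2\rceil$, which yields $\Psi(T)\ge\lceil n/2\rceil$ for every tree, and for the equality clause one argues that each of Theorem \ref{degree-3-num} and Lemmas \ref{Path-4}, \ref{3-leaves} exhibits a strictly smaller tree, so any $T$ violating a constraint has $\Psi(T)>\lceil n/2\rceil$; hence $\Psi(T)=\lceil n/2\rceil$ forces $T$ to be a reduced spider, that is one of $S^{0},S^{1},S^{2}$.

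The part I expect to require the most care is not any single computation but the bookkeeping that makes the reduction airtight. I would verify that the reductions invoked are genuinely strict, so that a non-reduced tree cannot also attain $\lceil n/2\rceil$, and I would double-check the degenerate small cases where $S^{t}(K_{1,m})$ collapses to a path, making sure the uniqueness statement is not spoiled there. I would also be careful with the parity argument, confirming that for each residue of $n$ modulo $2$ exactly the listed families arise, so that the $(\Leftarrow)$ direction of the characterization (supplied by Lemma \ref{S222}) and the $(\Rightarrow)$ direction (supplied by the reductions) fit together with no gap.
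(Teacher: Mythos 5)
Your proposal is correct, but it reaches the lower bound by a genuinely different route than the paper. The paper proves $\Psi(T)\geq\lceil\frac{n}{2}\rceil$ by induction on $n$: delete a leaf $u$, use Lemma \ref{ab} to get $\Psi(T)\geq\Psi(T-u)$, and in the odd case invoke Theorem \ref{even} to find a leaf whose deletion \emph{strictly} decreases $\Psi$, gaining the needed $+1$; the structural lemmas (Theorem \ref{degree-3-num}, Lemmas \ref{3-leaves}, \ref{Path-4}, \ref{3-raysstar}) are then used only for the necessity half of the equality clause. You instead run a pure extremal argument: a minimizer exists since there are finitely many $n$-vertex trees, the reduction lemmas force any minimizer to be a spider with legs of length $1$ or $2$ and at most two legs of length $1$, the parity relation $n=1+p+2q$ pins down $p$, and evaluating $\Psi$ on the surviving trees via Algorithm \ref{algorithm} (recovering Lemma \ref{S222}) shows the minimum equals $\lceil\frac{n}{2}\rceil$ --- which yields the bound and the characterization simultaneously, with no induction and no use of Theorem \ref{even} or Lemma \ref{DM}. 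Your bookkeeping checks out: a leg of three edges is a pendent path $P_4$ (the paper's pendent paths include the branch vertex), $q\geq1$ is forced since $p\leq2$ while $p+q=d(u)\geq3$, and for $n\geq8$ the pendent-star exclusions of Corollary \ref{d-raysstar} and Lemma \ref{3-raysstar} are actually redundant in your classification, since a pendent star's center must be the unique branch vertex and would force $q=1$, $p=d-1\leq2$, hence $n\leq5$. The trade-off: the paper's induction gives the inequality for every tree directly and isolates the parity phenomenon as a statement of independent interest (Theorem \ref{even} and its corollary), whereas your argument is shorter and makes the equality analysis do double duty, at the cost of resting everything on the $n\geq8$ hypotheses of the structural lemmas --- which is why your explicit brute-force verification of both the bound and the uniqueness for all trees of order at most $7$ (where the families $S^{t}(K_{1,m})$ degenerate to paths) is not optional but an essential base for the whole argument.
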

\begin{proof}
It is easy to check that the theorem holds for $n=2, 3, \ldots, 7$. Next, suppose $n\geq8$.
We prove the theorem by induction on $n$. Suppose that it holds for any tree with order less than $n$.
We choose one leaf point $u$ in $T$, and consider $T$ as the root tree rooted at vertex $u$. Denote the only neighbor of $u$ by $x$.
By Lemma \ref{ab} and Algorithm \ref{algorithm}, $\alpha_x\leq\gamma_x, \alpha_u=\beta_x, \beta_u=\gamma_x$.

If $n$ is even, then $\lceil\frac{n}{2}\rceil=\lceil\frac{n-1}{2}\rceil$.
Since $\Psi(T)=\alpha_u+\beta_u=\beta_x+\gamma_x\geq\beta_x+\alpha_x=\Psi(T-u)$ and by induction $\Psi(T-u)\geq\lceil\frac{n-1}{2}\rceil$, $\Psi(T)\geq\lceil\frac{n}{2}\rceil$.

If $n$ is odd, then $\lceil\frac{n}{2}\rceil=\lceil\frac{n-1}{2}\rceil+1$.
By Theorem \ref{even}, we can choose the leaf point $u$ of $T$ such that $\Psi(T)\geq\Psi(T-u)+1$. By the induction, $\Psi(T-u)\geq\lceil\frac{n-1}{2}\rceil$. So $\Psi(T)\geq\lceil\frac{n}{2}\rceil$.

By Lemma \ref{S222}, we only need to show the necessity.
Since $\Psi(T)$ is minimum over all trees with $n$ vertices, by Theorem \ref{degree-3-num}, $T$ has exactly one vertex of degree at least $3$.
We denote this vertex by $u$.
Applying the Lemmas \ref{3-leaves}, \ref{Path-4} and \ref{3-raysstar}, we have $T\cong S^{0}(K_{1,\frac{n-1}{2}})$ or $S^{2}(K_{1,\frac{n+1}{2}})$ if $n$ is odd, $T\cong S^{1}(K_{1,\frac{n}{2}})$ if $n$ is even.
\end{proof}


%


\end{document}